\documentclass[10pt,a4paper,twoside]{article}
\usepackage[utf8]{inputenc}

\usepackage{fullpage} 
\usepackage{amsmath, amsthm, amssymb}
\usepackage{thmtools, thm-restate}
\usepackage{mathtools} 
\mathtoolsset{showonlyrefs}
\usepackage{graphicx} 
\usepackage{subcaption} 
\usepackage{dsfont} 
\usepackage{algorithmic} 
\usepackage[norelsize,english,ruled,lined]{algorithm2e} 
\usepackage{color}

\numberwithin{equation}{section}
\newtheorem{theorem}{Theorem}[section]
\newtheorem{lemma}[theorem]{Lemma}

\newtheorem{corollary}[theorem]{Corollary}

\theoremstyle{definition}

\usepackage[colorlinks=true, allcolors=blue]{hyperref}
\hypersetup{
 pdfauthor={H.~Do, J.~Feydy, O.~Mula},
 pdftitle={},
 pdfsubject={},
 pdfkeywords={}
}


\renewcommand{\a}{\ensuremath{\alpha}}
\renewcommand{\b}{\ensuremath{\beta}}
\newcommand{\R}{\mathbb{R}}

\newcommand{\cA}{\ensuremath{\mathcal{A}}}
\newcommand{\cB}{\ensuremath{\mathcal{B}}}
\newcommand{\cC}{\ensuremath{\mathcal{C}}}
\newcommand{\cD}{\ensuremath{\mathcal{D}}}
\newcommand{\cE}{\ensuremath{\mathcal{E}}}

\newcommand{\cL}{\ensuremath{\mathcal{L}}}
\newcommand{\cM}{\ensuremath{\mathcal{M}}}

\newcommand{\cN}{\ensuremath{\mathcal{N}}}

\newcommand{\cP}{\ensuremath{\mathcal{P}}}

\newcommand{\cS}{\ensuremath{\mathcal{S}}}

\newcommand{\cV}{\ensuremath{\mathcal{V}}}

\newcommand{\cX}{\ensuremath{\mathcal{X}}}
\newcommand{\cY}{\ensuremath{\mathcal{Y}}}
\newcommand{\cZ}{\ensuremath{\mathcal{Z}}}


\newcommand{\bE}{\ensuremath{\mathbb{E}}}

\newcommand{\bN}{\ensuremath{\mathbb{N}}}

\newcommand{\bR}{\ensuremath{\mathbb{R}}}

\newcommand{\rA}{\ensuremath{\mathrm{A}}}

\newcommand{\rR}{\ensuremath{\mathrm{R}}}
\newcommand{\rS}{\ensuremath{\mathrm{S}}}

\newcommand{\rX}{\ensuremath{\mathrm{X}}}
\newcommand{\rY}{\ensuremath{\mathrm{Y}}}
\newcommand{\rZ}{\ensuremath{\mathrm{Z}}}


\newcommand{\om}[1]{\textcolor{blue}{#1}} 


\newcommand{\cond}{\ensuremath{\,:\,}}


\newcommand{\Id}{\ensuremath{\text{Id}}}


\newcommand{\charfun}{\ensuremath{\mathds{1}}}


\DeclareMathOperator*{\argmax}{arg\,max}
\DeclareMathOperator*{\argmin}{arg\,min}

\newcommand{\rd}{\ensuremath{\mathrm d}}

\newcommand{\dr}{\ensuremath{\mathrm dr}}

\DeclareMathOperator*{\vspan}{span}


\renewcommand{\Pr}{\ensuremath{\cP_2(\Omega)}}
\newcommand{\bary}{\ensuremath{\mathrm{Bar}}}
\newcommand{\spt}{\ensuremath{\mathrm{spt}}}

\newcommand{\av}{\ensuremath{\text{av}}}
\newcommand{\wc}{\ensuremath{\text{wc}}}

\newcommand{\EE}{\text{EE}}

\newcommand{\PG}{\texttt{PG}}
\newcommand{\AS}{\texttt{AS}}
\newcommand{\GAS}{\texttt{GAS}}
\newcommand{\RGSP}{\texttt{RGSP}}
\newcommand{\BGA}{\texttt{BGA}}
\newcommand{\NN}{\texttt{NN}}
\newcommand{\IDW}{\texttt{IDW}}
\newcommand{\NW}{\texttt{NW}}
\newcommand{\Best}{\texttt{Best}}


\newcommand{\class}{\cA} 
\newcommand{\map}{a}     

\newcommand{\ma}{\alpha} 
\newcommand{\mb}{\beta}  
\newcommand{\mgI}{\nu} 

\newcommand{\barset}{\rA}
\newcommand{\barfun}{\alpha}
\newcommand{\barsetsp}{\rY}
\newcommand{\weight}{\omega} 
\newcommand{\weightcomp}{\textrm{w}} 



\begin{document}
\title{
	Approximation and Structured Prediction \\with Sparse Wasserstein Barycenters 
}
\author{Minh-Hieu Do, Jean Feydy, Olga Mula}
\date{}
\maketitle

\abstract{
	We develop a general theoretical and algorithmic framework for sparse approximation and structured prediction in $\Pr$ with Wasserstein barycenters. The barycenters are sparse in the sense that they are computed from an available dictionary of measures but the approximations only involve a reduced number of atoms. We show that the best reconstruction from the class of sparse barycenters is characterized by a notion of best $n$-term barycenter which we introduce, and which can be understood as a natural extension of the classical concept of best $n$-term approximation in Banach spaces. We show that the best $n$-term barycenter is the minimizer of a highly non-convex, bi-level optimization problem, and we develop algorithmic strategies for practical numerical computation. We next leverage this approximation tool to build interpolation strategies that involve a reduced computational cost, and that can be used for structured prediction, and metamodelling of parametrized families of measures. We illustrate the potential of the method through the specific problem of Model Order Reduction (MOR) of parametrized PDEs. Since our approach is sparse, adaptive and preserves mass by construction, it has potential to overcome known bottlenecks of classical linear methods in hyperbolic conservation laws transporting discontinuities. It also paves the way towards MOR for measure-valued PDE problems such as gradient flows.
    }


\section*{Introduction}

\paragraph{Structured Prediction:} We consider the problem of estimating an unknown function
$f:\cX\to \cY$ from two sets $\cX$ and $\cY$, given a finite number samples $\{ (x_i, y_i)\}_{i=1}^N \subseteq \cX\times \cY$, with $y_i = f(x_i)$. In other words, our goal is to use the finite set of input-output pairs to approximate the full graph of the function $f$, with image
$$
\rY \coloneqq f(\cX) = \{ y = f(x) \cond x \in \cX \} \subset \cY~.
$$
In the following, we identify our training dataset with the three sets:
\begin{equation}
\label{eq:training-sets}
X_N \coloneqq\{x_i\}_{i=1}^N~,\qquad Y_N \coloneqq \{ y_i \}_{i=1}^N~,\qquad S_N=\{(x_i, y_i)\}_{i=1}^N~,
\end{equation}
We will often see these sets as an available dictionary of atoms for our approximation purposes.

The above task, usually known under the name of \emph{least squares fitting}, \emph{regression} or \emph{metamodelling}, arises in numerous applications, and it has a very long history in applied mathematics and computational science. It can be traced back at least to the first efforts in estimating the shape of the earth that took place several millenia ago (see \cite{Nievergelt2000}). Despite this long tradition, least squares problems are still nowadays a topic of very active research. An important modern challenge arises in the case of \emph{structured outputs} that present a rich geometric structure such as as sparsity, a specific graph structure or the membership to a manifold. This structure translates into the output space $\cY$ being nonlinear, and modelling it in mathematical terms is often a difficult task. In this setting, one may be tempted to embed $\cY$ into a larger linear space that may be easier to handle. However, working with the nonlinear space $\cY$ comes with a clear advantage: when suitably incorporated within the learning model, knowledge about the structure of the output space allows one to guarantee the plausibility, and interpretability of the model output. It may also help to decrease the complexity of the problem, both in terms of sample complexity and in terms of model size.

Our contribution fits into this general line of research, which is sometimes called \emph{structured prediction}. We refer to \cite{bakir2007predicting,ciliberto2016consistent,ciliberto2017consistent,osokin2017structured,korba2018structured,rudi2018manifold} for notable contributions in this area, and which have been inspirative for our work. These works cover cases of outputs from spaces of trees, manifolds or label rankings. In this work, we focus on the case where the outputs belong to the space $\cY=\cP_2(\Omega)$ of probability measures defined over a domain $\Omega\subseteq\bR^s$, and with finite second order moments. This specific setting  has only been explored in a few prior works to the best of our knowledge (see \cite{luise2018differential, ELMV2020, BBEELM2022}). It can however be linked with a larger body of contributions that studies how to \emph{approximate}, and \emph{compress} families of measures but where the component of mapping inputs to outputs is not present (see, e.g., \cite{Fletcher2004PGA, Sommer2014, schmitz2018wasserstein, LDCB2021}). As we explain later on, both problems of approximation and least squares fitting are in fact connected because the optimal approximation of a family of measures gives a lower bound on the performance of least squares, structured prediction methods.

Despite the lack of a large body of works on structured prediction in $\cP_2(\Omega)$, the problem appears to be highly relevant given that measures arise as fundamental objects in numerous fields such as economy \cite{carlier2012optimal}, quantum chemistry \cite{cotar2015infinite}, or physical modelling of conservation laws and gradient flows \cite{ambrosio2005gradient}. Probability measures also play a key role in machine learning and imaging. Among the many applications we may cite stand landmark-free shapes and surfaces in computer graphics and medical imaging \cite{glaunes2004diffeomorphic,vaillant2005surface,charon2013varifold,charon2020fidelity}, persistence diagrams in topological data analysis \cite{lacombe2018large,divol2021understanding}, generative modeling \cite{arjovsky2017wasserstein, salimans2018improving}, or predicting cell trajectories \cite{schiebinger2019optimal}. Due to the increasing presence and importance of probability measures in all these fields, studying how to handle them as first-class objects has become a fundamental problem. Our work is a contribution in this direction where the focus lies in understanding how to extend classical least squares strategies from linear vector spaces to the space of measures $\cY=\cP_2(\Omega)$.

\paragraph{Goal: Sparse, adaptive regression in the space of measures:} To estimate the unknown function $f:\cX\to \cY$ from the samples, the main task is to build an approximation map $\map: \cX \rightarrow \cY$ such that, for every input parameter $x\in \cX$, $\map(x)$ approximates $f(x)$ accurately according to some quality criterion. We also ask that the approximation is performed at a reduced computational cost. This is to make the map $a$ be useful in multi-querry contexts arising in applications such as input parameter optimization, or parameter estimation.

In this work, we apply a supervised learning strategy to find a good mapping $a$ by an optimization procedure involving the training samples $\rS_N$. This  requires selecting a priori a model class $\class\subset \cY$ and then to learn the best mapping $a: \cX \to \class$ following some quality criterion. The class translates an educated belief about the geometry/behavior of the set of outputs $\rY = f(\cX)$. In general $\class$ could be either an $n$-dimensional space, or more generally a nonlinear approximation space parametrized by $n$ degrees of freedom, and for every $x$, we have to estimate the parameters to build $a(x) \in \class$. 

The final approximation quality of the mapping will of course dramatically depend on our ability to work with a good class $\class$ but note that designing suitable classes in $\Pr$ is challenging. The lack of linear structure of the space prevents us from working with classical subspaces generated, e.g., by polynomials, radial basis functions or wavelets. Naive neural network parametrizations will also not provide elements in $\Pr$. One could resort to parametrized families of measures such as Gaussian mixtures but in general this type of choice will not allow to interpolate at the training points. In other words, we will not be able to build $a$ such that $a(x)=f(x)$ for $x\in \rS_N$ unless $f(x)$ belongs to the parametrized family.

The most natural class $\class$ that allows for interpolation appears to be the one generated by Wasserstein barycenters from $\rY_N$ as already observed in \cite{luise2018differential, ELMV2020, BBEELM2022}. 
Similarly as in these works, our starting point will be this class, and our development is motivated by the fact that, when the amount of training data $N$ is large, the question of producing a \textit{compressed} barycentric approximation of a given target measure arises. Working in this form is motivated (and even imposed) by several factors:
\begin{itemize}
\item \textbf{Concise representations:} As $N$ increases the dataset may contain functions that are redudant so  sparse representations are expected to distill the most important features in a concise way.
\item \textbf{Stability:} In linear spaces, it is well-known that stability issues arise when working with approximation classes whose dimension is close to the number of observations (see, e.g., \cite{CDL2013}). In our setting, this translates into the fact that the best approximants from the class $\class$ of barycenters with $N$ measures may not be unique. Decreasing the intrinsic dimensionality of the class is therefore expected to mitigate this issue even if uniqueness still cannot be guaranteed. 
\item \textbf{Storage:} State-of-the-art barycenter solvers have a strong memory footprint and it may become even unfeasible to solve the barycenter problem with a too large dataset.
\item \textbf{Numerical Complexity:} Working with sparse barycenters reduces the numerical complexity of solving the barycenter problem, and helps in multi-querry contexts.
\end{itemize}

\paragraph{Contributions of the paper:}
To work with concise barycentric representations, we propose to search for the best approximation of a target measure using only a reduced number $n\leq N$ of samples from the dataset $\rY_N$. This leads us to introduce the notion of best $n$-term barycentric approximation. This concept is interesting in its own right since it is a natural extension in a metric space of the best $n$-term approximation on Hilbert or Banach spaces. For a given target measure, the problem boils down to finding a vector of $N$ barycentric weights which has only $n\leq N$ nonzero entries. We show that this best barycentric approximation can be computed by solving a highly non-convex, bi-level optimization problem. We develop strategies for its practical numerical implementation which deliver satisfactory results in practice as our numerical experiments illustrate.

In the framework of structured prediction, we want to compute, for every input $x\in \rX$, the best $n$-term approximation of $f(x)$ by computing the sparse barycentric weights. However, since this task involves the knowledge of $f(x)$, the approximation with the best $n$-term barycenter is only possible for the available training data points $x$ from $\rX_N$. We thus need to resort to a surrogate strategy which will be suboptimal, but which has to be built in a way to deviate as little as possible from the optimal sparse weights. We propose for this an approach based on Euclidean embeddings in which we learn a metric in the inputs that mimics distances between outputs. This strategie comes with several advantages. First, it is ``model-free'' in the barycentric weights because we do not need to introduce any approximation class to parametrize the behavior of the optimal weights. This point is actually a key novelty with respect to prior contributions where kernel models are used to mimic the behavior of the best barycentric weights (see, e.g., \cite{ciliberto2016consistent, luise2018differential}). Second, the Euclidean embedding approach is invariant to rotations and translations in the parameterization of the space of input vectors. In addition, it is fully adaptive since the $n$ barycentric measures used for the approximation vary with $x$. This is in contrast to prior works such as \cite{ELMV2020, BBEELM2022} where the barycentric measures are selected once and for all.

The theory and methods that we present are general and can be applied to any structured prediction problem with measure-valued outputs. 
Among the possible applications we may mention certain learning tasks in computational anatomy (see, e.g., \cite{pennec2019riemannian, shen2021accurate}), or applications from quantum chemistry where the goal is to learn the probability density of a molecule as a function of inputs such as interatomic positions (see, e.g., \cite{westermayr2022high}). Our main application of interest for the paper has been model order reduction of parametrized PDEs. In the next section, we explain this problem problem more in detail, and we reflect on the relevance of our contribution for that particular field. The application of our ideas to model order reduction is actually another contribution of this work since, as we next explain, it provides a new avenue on how to address certain specific challenges that arise in this field.

Last but not least, we have released on this link
\begin{center}
	\href{https://gitlab.tue.nl/20220022/sinkhorn-rom}{https://gitlab.tue.nl/20220022/sinkhorn-rom}
	\end{center}
the code that we have developped to generate our numerical examples. It contains the implementation of the best $n$-term barycentric approximation for a given known target measure. It also contains our structured prediction approach based on Euclidean embeddings, and we also include the implementation of methods involving heuristic models on the barycentric weights, as well as the non-adaptive greedy barycentric approach initially introduced in \cite{ELMV2020}. 

We hope that the work will spur interest for measure-valued learning in physics and imaging sciences.

\paragraph{Model order reduction:}
At the core of many computational tasks arising in science and engineering is the problem of repeatedly evaluating the output of an expensive physical model for many different instances of input parameter values. Such settings include the numerical solution of parametric Partial Differential Equations (PDEs) for many different values of the parameters, time-stepping for evolutionary PDEs and, more generally, the repeated evaluation of input-output maps defined by black-box computer models. In order to compute the outputs in a reasonable amount of time, it is necessary to find methods that approximate them accurately and at a reduced computational cost.

In the case of parametric PDEs, this task is usually known as \emph{model order reduction} (MOR), \emph{reduced order modeling}, or \emph{metamodeling}. To be concrete, and guide the discussion that follows, consider the prototypical parametric PDE
\begin{equation}
\label{eq:pde}
\cP_x(y)(r) = 0~, \quad \forall r \in \Omega~,
\end{equation}
where $\cP_x$ is a differential operator depending on a parameter $x$, and $y$ is the solution to the PDE (given appropriate boundary conditions). 
Here, and in the rest of the paper, we use $r\in \Omega$ to denote the independent variable in the PDE. $\Omega$ is most often a bounded open set of $\bR^s$. It usually refers to space but it may also refer to more elaborate sets of variables such as time, momentum and possibly other physical variables.

The parameters $x$ often take values in some compact domain $\cX$ of $\bR^d$ but, more generally, $\cX$ can be a compact set of a Banach space. The nature of the solution space $\cY$ strongly depends on the nature of the PDE operator $\cP_x$. In elliptic or parabolic equations, solutions belong to Hilbert spaces. In hyperbolic problems and kinetic problems, solutions are often studied in Banach spaces. There are also numerous relevant PDEs where solutions can be seen as probability distributions. Notable examples are conservation laws, whose solutions are often expressed in $L^1(\Omega)$  but they can also be seen as probability distributions since they preserve mass. Connected to this family stands the broad class of Wasserstein gradient flows which are inherently defined in $\Pr$. To name a few examples on gradient flow problems, we may cite Hele-Shaw flows (see \cite{GO2001}), certain quantum problems (see \cite{GST2009}), porous media flows (see \cite{Otto2001}), Fokker-Planck equations (\cite{JKO1998}), and Keller-Segel models (see \cite{ZM2015, BL2013}). Other examples involving metric spaces that are not necessarily related to gradient flows are the Camassa-Holm equation (\cite{BF2005}), and the Hunter-Saxton equation (\cite{CGH2019}). In addition to this, there are other problems which cannot be defined on Banach spaces, and which can only be defined over metric spaces. Consider for instance the case of a pure transport equation with constant velocity where the initial data is a Dirac measure concentrated on one point. The solution of this PDE remains  at all times a (translated) Dirac mass. More generally, it has been proven that solutions to certain nonlinear dissipative evolution equations with measure-valued initial data are measure-valued and do not belong to some standard Lebesgue or Sobolev spaces. They can however be formulated in the form of Wasserstein gradient flows.

Independently of the nature of the solution space $\cY$, in reduced modeling we are interested in giving fast approximations of the parameter to solution map
$$
f: x \mapsto f(x) = y(x)
$$
from $\cX$ to $\cY$. The structure of interest is that of the compact set of PDE solutions
\begin{equation}
\label{eq:manifold}
\rY \coloneqq \{ y(x) \cond x \in \cX \} \subset \cY,
\end{equation}
which is often referred to as the \emph{solution manifold}, with some abuse of terminology since it may not be a genuine differentiable manifold.

Until the recent works \cite{ELMV2020, BBEELM2022}, research on model order reduction has focused on parametric PDEs posed on Hilbert or Banach spaces. Much attention had been given to elliptic problems where linear approximations such as \cite{HRS2015,QMN2016, BMNP2004,GMNP2007,MM2013,MMT2016, CDS2010,CDS2011, KS2011} provably give very good approximations of the parameter-to-solution mapping. Although the development of nonlinear model reduction methods to tackle broader problem classes is currently very active, the efforts are essentially focused on solutions embedded in Hilbert spaces with Euclidean metric structures (see \cite{} for recent contributions). To the best of our knowledge, methods for measure-valued problems have only been proposed in \cite{ELMV2020, BBEELM2022} despite the large problem classes that fall into this framework. In this landscape, our work can be understood as a contribution in nonlinear model reduction for PDEs posed in $\Pr$ where the metric is non Euclidean. Notably, compared to \cite{ELMV2020, BBEELM2022}, our algorithms are fully adaptive (in a sense that we explain later on) and our implementation allows to treat any spatial dimension. This point is in fact an  important step because the implementation in dimensions larger than one is non-trivial since we cannot rely on closed forms for Wasserstein distances, and barycenters as was leveraged in \cite{ELMV2020, BBEELM2022}.

\paragraph{Structure of the paper:} Section \ref{sec:optimal-approx} defines optimal approximation benchmarks for structured prediction in $\cY = \Pr$. In Section \ref{sec:barycenter-class} we recall the necessary background on Wasserstein spaces and barycenters. This allows us to properly define the class of $n$-sparse barycenters, and the best $n$-term barycentric approximation of a given measure. We show that the best $n$-term approximation is in fact an optimal reconstruction map for structured prediction when working with the class of sparse barycenters. Section \ref{sec:algo-best-n-term} presents numerical algorithms for the practical implementation of the best $n$-term barycentric approximation of a given target measure. Section \ref{sec:interp-weights} presents practical interpolation algorithms to mimic at best the optimal sparse barycentric weights in the framework of structured prediction. After recalling the main existing baseline methods, we present a novel strategy based on fully invariant regression with Local Euclidean Embeddings. Sections \ref{sec:implementation} and \ref{sec:numerical_experiments} are devoted to numerical aspects. All our algorithms require numerous computations of Wasserstein distances, barycenters, and the efficient computation of gradients with respect to distances and barycentric weights. To this end, we rely on a GPU implementation based on entropic regularization that enables fast automatic differentiation. In Section \ref{sec:implementation} we recall the definition of the quantities that are computed by our solvers and provide some specific details about our implementation. Finally, Section \ref{sec:numerical_experiments} illustrates the behavior of all the methods that we discuss in the paper. The study is carried on relatively simple examples from Model Order Reduction of a 2D Burgers' equation.

\section{Approximation benchmarks for structured prediction}
\label{sec:optimal-approx}

\paragraph{Goal:} As explained in the introduction, our goal is to approximate an unknown function $f:\cX\to \cY$ from $N$ data values $\rS_N = \{(x_i, y_i)\}_{i=1}^N$ with $y_i=f(x_i)$. We focus on the case where $\cX$ is a subset of $(\bR^d, \Vert \cdot \Vert)$, and $\cY\subseteq \cP_2(\Omega)$. Using these values, we intend to define a mapping $\map:\cX\to \cY$ such that, for every input parameter $x\in \cX$, $\map(x)$ approximates quickly and accurately $f(x)$. For this, our strategy consists in selecting a priori a model class $\cA \subset \cY$ and we search for the best mapping $a:\cX\to \cA$ according to a quality criterion which we define next.

\paragraph{Approximation Benchmarks:} Assuming that we have fixed the approximation class $\class$, two distinct quality criteria are usually followed to evaluate the performance of a mapping $a:\cX\to \cA$:
\begin{itemize}
\item \emph{Average performance:} We assume that the inputs are distributed following a probability distribution $\rho_\cX$ on $\bR^d$ with support on $\cX$. Given a positive loss function $\cL:\cY\times \cY \to \bR_+$
that behaves as a metric or a divergence on the space of probability distributions $\cY$, the average approximation error with respect to $\rho_\cX$ reads
\begin{equation}
\label{eq:av-loss}
\cE^{\av}(\map) ~\coloneqq~ \bE_{x \sim \rho_\cX} \big[\, \cL \big(\,\map(x),\, f(x)\,\big) \,\big]~.
\end{equation}
\item \emph{Worst case performance:} One can alternatively consider the worst case approximation error
\begin{equation}
\label{eq:wc-loss}
\cE^{\wc}(\map) ~\coloneqq~ \max_{x \in \cX} \cL \big(\,\map(x),\, f(x)\,\big),
\end{equation}
which does not require any assumption on the input distribution. 
\end{itemize}
In both cases, optimal approximation maps are characterized as the set of minimizers of
\begin{equation}
\label{eq:risk-min}
\map^* \in \argmin_{\map: \cX \to \class} \,\cE^{\star}(\map),\quad \star \in \{\av, \wc\}.
\end{equation}
Note that there exists an explicit characterization of an optimal map $a^*$. To see this, for every measure $\ma \in \cY$ let us denote by $P_\cA(\ma)$ the element from $\cA$ that best approximates $\ma$. In other words,
\begin{equation}
\label{eq:best-approx}
P_\cA(\ma) \in \argmin_{\mgI \in \cA} \cL(\mgI, \ma),\quad \forall \ma \in \cY.
\end{equation}
To simplify the presentation, and anticipate practical cases that we consider later on, we will always assume that $\cA$ and $\cL$ are such that the minimum of \eqref{eq:best-approx} is attained (so that we can work with $\min$ instead of $\inf$). The notation in \eqref{eq:best-approx} echoes the fact that $P_\cA : \cY \to \cA$ is a projection operator from $\cY$ onto $\cA$ under the loss function $\cL$.

By construction, we can readily prove that the mapping
\begin{equation}
\label{eq:optimal-map}
a^*(x) = P_\cA f(x), \quad \forall x \in \cX
\end{equation}
is optimal both in the average and in the worst case sense.

\paragraph{Roadmap:} In general we cannot expect to build the optimal map \eqref{eq:optimal-map} using only a finite amount of samples $\rS_N$. Indeed, for every $x\in \cX$, the computation of $P_\cA \left(f(x) \right)$ requires full knowledge of $f(x)$ as \eqref{eq:best-approx} shows but $f(x)$ is only available for the training points $x\in \rS_N$. We thus need to resort to suboptimal mappings $a$. If the loss function $\cL$ satisfies the triangle inequality, the performance of any  map $a:\cX\to \cA$ can be bounded as
$$
\cE^{\star}(\map) \leq \cE^{\star}(\map^*) + \delta^{\star}(\map, \map^*),\quad \star \in \{\av, \wc\},
$$
where
$$
\delta^\av \coloneqq \bE_{x\sim \rho_\cX} \big[\,\cL(a(x), a^*(x))\, \big]\,
\quad \text{and} \quad
\delta^\wc \coloneqq \max_{x\in\cX} \;\cL(a(x), a^*(x)).
$$
This inequality reveals that there are two sources of inaccuracies in suboptimal maps:
\begin{itemize}
\item A \emph{model class error} $\cE^{\star}(\map^*)$ which is connected to the ability of $\cA$ to accurately approximate $\rY=f(\cX)$, the image of $f$.
\item An \emph{algorithm-dependent error} $\delta^{\star}(\map, \map^*)$ which is connected to our ability of mimicking as much as possible an optimal map $a^*:\cX\to \cA$.
\end{itemize}
In our strategy, we will work with a class $\cA$ such that $\rY_N\subset \cA$, and we will build a map $a$ which interpolates the optimal $a^*$ in the sense that
$$
a(x) = a^*(x) = P_\cA f(x) =f(x), \quad \forall x\in \rX_N. 
$$
Our approximation class $\cA$ will be composed of sparse Wasserstein barycenters generated from $\rY_N$ in the sense that we explain in the next section. Also, since our final application focuses on probability distributions involving geometric displacements such as solutions of conservative PDEs, and gradient flows, we choose the Wasserstein-2 metric as the loss function, therefore $\cL=W_2$ in the following (see formula \eqref{eq:W2} in the next section for the exact definition of $W_2$). We emphasize that working with other metrics or divergences in $\Pr$ is also be possible, and the choice should be driven by the final targeted application.

\section{The class of $n$-sparse barycenters}
\label{sec:barycenter-class}

Our work focuses on structured prediction with the class of $n$-sparse barycenters generated from $\rY_N$. To define it, we first recall the necessary background on the Wasserstein metric that is induced by optimal transport theory on a space of probability distributions.

\subsection{Wasserstein space and distance}
\label{sec:wasserstein-space}

Let $\Omega$ be a compact subset in the normed vector space $(\bR^s, \Vert \cdot \Vert)$, with $ \Vert \cdot \Vert$ the Euclidean norm on $\bR^s$. We denote $\cM(\Omega)$ the space of Borel regular measures on $\Omega$ with finite total mass and
\begin{align*}
\cM^+(\Omega) ~&\coloneqq~ \{ \mgI \in \cM(\Omega) \cond \mgI \geq 0 \}~,\\
\cP(\Omega) ~&\coloneqq~ \{ \mgI \in \cM^+(\Omega) \cond \mgI(\Omega)=1 \}~.
\end{align*}
The Wasserstein space $\cP_2(\Omega)$ is defined as the set of probability measures $\mgI\in \cP(\Omega)$ with finite second order moments, namely
$$
\Pr ~\coloneqq~ \{ \mgI \in \cP(\Omega) \cond \int_\Omega \Vert r \Vert^2 \,\rd\mgI(r) \;< +\infty \}~.
$$
We endow the output space $\cP_2(\Omega)$ with the Wasserstein-2 metric that is induced by optimal transport theory which is defined as follows. Let $\ma$ and $\mb$ be two probability measures in $\cP_2(\Omega)$. We define $\Pi(\ma, \mb)\subset \cP_2(\Omega\times \Omega)$ as the subset of probability distributions $\pi$ on $\Omega\times \Omega$ with marginal distributions equal to $\ma$ and $\mb$. The Wasserstein-2 distance between $\ma$ and $\mb$ is defined as:
\begin{equation}
\label{eq:W2}
W_2(\ma,\mb) ~\coloneqq~ \mathop{\inf}_{\pi \in \Pi(\ma,\mb)} \left( \int_{\Omega \times \Omega} \Vert r_0-r_1\Vert^2 \,d\pi(r_0,r_1) \right)^{1/2},\quad \forall (\ma,\mb)\in \cP_2(\Omega) \times \cP_2(\Omega)~.
\end{equation}
As detailed in \cite{Villani2003}, the space $\cP_2(\Omega)$ endowed with the distance $W_2$ is a metric space that is usually called the Wasserstein space.

\subsection{Barycenters}
As already brought up, Wasserstein barycenters arise as natural objects to define a reasonable, and computationally tractable approximation class in $\cY$. Their definition and properties are well understood since their introduction in \cite{AC2011}. Let $N\in \bN^*$ denote the number of observations in our dataset and consider the simplex in $\bR^N$:
$$
\Sigma_N ~\coloneqq~ \Big\{\, \weight_N = (\weightcomp_1,\dots, \weightcomp_N)^T \in \bR^N\cond \weightcomp_i\geq 0,\, \sum_{i=1}^N \weightcomp_i = 1 \, \Big\}~.
$$
Given a set of weights $\weight_N = (\weightcomp_i)_{1\leq i\leq N} \in \Sigma_N$, and given a set $\barset_N = \{\barfun_i\}_{1\leq i\leq N}$ of $N$ probability measures from $\cP_2(\Omega)$, we say that $\bary(\weight_N, \barset_N) \in \cP_2(\Omega)$ is a barycenter associated to $\weight_N$ and $\barset_N$ if and only if:
\begin{equation}
\label{eq:barygen}
\bary(\weight_N, \barset_N)
\in
\arg 
\inf_{\mgI \in \cP_2(\Omega)} \sum_{i=1}^N \weightcomp_i W_2^2(\mgI,\barfun_i)~.
\end{equation}
For subsequent developments, we consider the set of all barycenters $\bary(\weight_N, \barset_N)$ built from $\barset_N$ using weights $\weight_N$ that take values in the simplex $\Sigma_N $:
\begin{equation}
\bary(\Sigma_N, \barset_N) ~\coloneqq~ \{  \bary(\weight_N, \barset_N) \cond \weight_N \in \Sigma_N \} \subset \cP_2(\Omega)~.
\label{eq:bary}
\end{equation}
Assuming that the dataset $\barset_N$ is fixed,
we also introduce a specific notation for the loss function involved in the barycenter problem:
\begin{align}
L~: ~(\weight_N, \mgI) \in  \Sigma_N\times \cP_2(\Omega) ~\mapsto~ L(\weight_N, \mgI) ~\coloneqq~ \sum_{i=1}^N \weightcomp_i W_2^2(\mgI,\barfun_i)\in \bR~. \label{eq:def-h}
\end{align}
Note that $L$ is continuous in the first variable $\weight_N$ and lower semi-continuous in the second variable $\mgI$ with respect to the weak convergence. This is a consequence of the fact that, for any given $\ma\in \Pr$, the map $\mgI\in \cP_2(\Omega) \mapsto W_2^2(\mgI, \ma)$ is lower semi-continuous with respect to the weak convergence.

The existence and uniqueness of minimizers of \eqref{eq:barygen} has been studied in depth in \cite{AC2011}. In particular, this work showed that if one of the distributions $\barfun_i$ in the dataset $\barset_N$ has density with respect to the Lebesgue measure, the barycenter is unique: in the remainder of this paper, we assume that this condition is always satisfied.

For our numerical computations, we must guarantee that for a fixed dataset $\barset_N$, the mapping
\begin{align}
\bar L~:~
\weight_N \in \Sigma_N ~\mapsto~ \bar L(\weight_N) = \min_{\mgI\in \Pr} L(\weight_N, \mgI)\in\bR
\label{eq:bary-proof-compact}
\end{align}
is continuous and differentiable from the simplex $\Sigma_N$ of $(\bR^N, \Vert \cdot \Vert)$ to $\bR$. This is ensured by the following Lemma, whose proof is provided in Appendix \ref{sec:proof-lemma}. We stress that the result does not guarantee any regularity of the arg-minimizers in \eqref{eq:bary-proof-compact}, but only of the value of the minimum.

\begin{lemma}
\label{lemma:cont-weights}
Let $\Omega$ be a compact subset of $(\bR^d, \Vert \cdot \Vert)$ and let $\barset_N =\{\barfun_i\}_{i=1}^N$ be a collection of $N$ measures from $\cP_2(\Omega)$. The application $\weight_N\mapsto \bar L(\weight_N)$ is continuous and differentiable in $\Sigma_N$. In addition, the set $\bary(\Sigma_N, \barset_N)$ of all barycentric combinations of the $\barfun_i$'s is weak sequentially compact in $(\cP_2(\Omega), W_2)$.
\end{lemma}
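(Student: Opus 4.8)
The plan is to deduce all three claims from one auxiliary fact: the barycenter map $\weight_N\mapsto\bary(\weight_N,\barset_N)$ is continuous from $\Sigma_N$ into $(\cP_2(\Omega),W_2)$. The starting observation is that, since $\Omega$ is compact, $(\cP_2(\Omega),W_2)$ is itself a compact metric space, and on it the $W_2$-topology coincides with the topology of weak convergence (weak convergence already forces convergence of second moments because $r\mapsto\Vert r\Vert^2$ is bounded and continuous on $\Omega$). Consequently, for each $i$ the map $\mgI\mapsto W_2(\mgI,\barfun_i)$ is $1$-Lipschitz on $(\cP_2(\Omega),W_2)$, hence continuous, and therefore so is its square; in particular $L$ is jointly continuous on $\Sigma_N\times\cP_2(\Omega)$, being affine in $\weight_N$ and continuous in $\mgI$.

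To prove continuity of the barycenter map, take $\weight_N^{(k)}\to\weight_N$ in $\Sigma_N$ and put $\mu^{(k)}\coloneqq\bary(\weight_N^{(k)},\barset_N)$. By compactness a subsequence converges, $\mu^{(k)}\to\mu$ in $W_2$. For every $\nu\in\cP_2(\Omega)$, minimality of $\mu^{(k)}$ gives $L(\weight_N^{(k)},\mu^{(k)})\le L(\weight_N^{(k)},\nu)$, and letting $k\to\infty$ with the joint continuity of $L$ yields $L(\weight_N,\mu)\le L(\weight_N,\nu)$; thus $\mu$ is a barycenter associated to $\weight_N$. The density assumption on $\barset_N$ makes this barycenter unique (by \cite{AC2011}), so $\mu=\bary(\weight_N,\barset_N)$; as the limit is subsequence-independent, the whole sequence converges, proving continuity. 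In particular $\bary(\Sigma_N,\barset_N)$ is the continuous image of the compact simplex $\Sigma_N$, hence compact, i.e.\ weakly sequentially compact, in $(\cP_2(\Omega),W_2)$.

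Continuity of $\bar L$ is then immediate by writing $\bar L(\weight_N)=L\big(\weight_N,\bary(\weight_N,\barset_N)\big)$ and composing the continuous barycenter map with the jointly continuous $L$ (equivalently, one may invoke Berge's maximum theorem, the feasible set $\cP_2(\Omega)$ being compact and independent of $\weight_N$). For differentiability I would apply Danskin's theorem to the representation $\bar L(\weight_N)=\min_{\mgI\in\cP_2(\Omega)}L(\weight_N,\mgI)$: the minimization is over the fixed compact metric space $\cP_2(\Omega)$, the objective $L$ is continuous, and $\weight_N\mapsto L(\weight_N,\mgI)$ is affine with gradient $\nabla_{\weight_N}L(\weight_N,\mgI)=\big(W_2^2(\mgI,\barfun_i)\big)_{i=1}^N$ depending continuously on $\mgI$. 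Since the minimizer is the \emph{unique} barycenter $\bary(\weight_N,\barset_N)$, Danskin's theorem shows that the one-sided directional derivatives of $\bar L$ along $\Sigma_N$ exist and are linear, i.e.\ $\bar L$ is differentiable on $\Sigma_N$ with
\[
\nabla\bar L(\weight_N)=\big(W_2^2(\bary(\weight_N,\barset_N),\barfun_i)\big)_{i=1}^N ,
\]
and this gradient is continuous in $\weight_N$ by the previous steps, so in fact $\bar L\in C^1(\Sigma_N)$.

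The main obstacle is the differentiability step. Continuity of $\bar L$ and compactness of $\bary(\Sigma_N,\barset_N)$ are soft and use only lower semicontinuity of $L(\weight_N,\cdot)$, which on the compact $\Omega$ strengthens to full continuity. Differentiability is genuinely more delicate: a concave value function like $\bar L$ (an infimum of affine functions) is only differentiable almost everywhere in general, and the extra ingredient that rescues everywhere differentiability is precisely the uniqueness of the barycenter --- the single place where the density hypothesis on $\barset_N$ enters --- which collapses the Danskin subdifferential to a single vector. One should also be slightly careful about the meaning of ``differentiable in $\Sigma_N$'', since $\Sigma_N$ has empty interior in $\bR^N$: this is read as differentiability along the affine hull $\{\weight_N\in\bR^N:\sum_i\weightcomp_i=1\}$, equivalently linearity of the feasible-direction derivatives, which is exactly what Danskin's theorem delivers, together with the continuity of $\nabla\bar L$ up to the relative boundary.
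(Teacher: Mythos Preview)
Your argument is correct and in fact tighter than the paper's, but it follows a genuinely different route. The paper first localizes the minimization to a bounded ball $\cV\subset\cP_2(\Omega)$ (using an ad hoc radius $\bar r$ and a weak sequential compactness result quoted from the literature), and then invokes Clarke's minimum theorem to obtain Lipschitzness and differentiability of $\bar L$; continuity of the barycenter map is simply asserted in the final sentence and used to deduce compactness of $\bary(\Sigma_N,\barset_N)$. You instead exploit directly that $(\cP_2(\Omega),W_2)$ is a compact metric space when $\Omega$ is compact, prove continuity of $\weight_N\mapsto\bary(\weight_N,\barset_N)$ by a clean subsequence-plus-uniqueness argument, and then apply Danskin's theorem rather than Clarke's.

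The trade-offs are as follows. Your approach is more elementary (no auxiliary ball, no external compactness lemma), it actually \emph{proves} the continuity of the barycenter map that the paper only claims, and it yields the explicit gradient $\nabla\bar L(\weight_N)=\big(W_2^2(\bary(\weight_N,\barset_N),\barfun_i)\big)_i$ together with $C^1$ regularity. The price is that Danskin's theorem needs a unique minimizer, so your differentiability argument leans on the standing density hypothesis on $\barset_N$; the paper's Clarke-based route does not require uniqueness and would still give Lipschitzness (and Clarke-differentiability) without it. Your closing remark about interpreting ``differentiable in $\Sigma_N$'' along the affine hull is well taken and applies equally to the paper's statement.
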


\subsection{The class $\class_N^n$ of $n$-sparse barycenters}

\paragraph{Motivation and definition:} 
When the number $N$ of measures from our dataset $\barset_N$ becomes large, the question of producing a \textit{compressed} representation of a given measure $\alpha\in \cP_2(\Omega)$ arises. One may thus search for barycentric approximations that involve a reduced number $n\leq N$ of measures from $\barset_N$. As already brought up in the introduction, the construction of such compressed representations is motivated (and even imposed) by several factors:
\begin{itemize}
\item \textbf{Reproducibility:} As $N$ increases, the dataset $\barset_N$ may contain functions that are redudant with respect to the purpose of approximating $\ma$. This translates into the multiplication of local and global minima. On the one hand, this over-parameterization may be advantageous during the numerical optimization; on the other hand, this redundancy may yield too much variability in the final output. Looking for a small number of distributions from $\barset_N$ may promote interpretable interpolation weights that catch the most salient features of the training dataset.
\item \textbf{Storage:} State-of-the-art solvers for the Wasserstein barycenter problem have a memory footprint that is proportional to that of the set of input distributions. In a context where each input distribution may be a high-resolution 3D volume with hundreds of thousands of voxels, keeping the ``batch size'' $N$ small may be a requirement to fit in RAM or GPU memory. As far as reconstruction quality is concerned, interpolating between $32$ high-resolution densities may be more accurate than relying on a full dataset of $1,000$ distributions that must be stored at a lower resolution.
\item \textbf{Numerical complexity:} Since the run time of Wasserstein barycenter solvers scales linearly with the number $N$ of source distributions, finding a subset of the full dataset $\barset_N$ that allows us to best estimate an output $\map(x) \simeq f(x)$ is also desirable to lower computational costs. 
\end{itemize}
In this context, for all size $1\leq n \leq N$, we define the class of $n$-sparse barycenters from $\barset_N$ as
\begin{equation}
\label{eq:n-sparse-bary}
\class_N^n ~\coloneqq~ \bary(\Sigma_N^n, \barset_N) ~=~ \{ \bary(\weight_N, \barset_N) \cond \weight_N \in \Sigma_N^n \}~,
\end{equation}
where
$$
\Sigma_N^n ~\coloneqq~ \{ \weight_N\in \Sigma_N  \cond \vert \spt(\weight_N)\vert \leq n \},
$$
is the set of $n$-sparse vectors from the simplex $\Sigma_N$, and where $\vert \spt(\weight_N)\vert$ denotes the cardinality of the support of $\weight_N$:
$$
\spt(\weight_N) ~\coloneqq~ \{i \in \{1,\dots, N\} \cond \weightcomp_i \neq 0\}~.
$$
It is interesting to remark that $\Sigma_N^n$ is the union of the $n$-simplices embedded in $\bR^N$, and there are $\binom{N}{n}$ of these simplices.

\paragraph{Best $n$-term barycenter:} For all $\ma \in \Pr$, $P_{\cA_N^n}(\ma)$ is the best approximation of $\ma$ with a reduced number $n\leq N$ of barycentric functions from $\barset_N$. We thus call it the best $n$-term barycenter of $\ma$. It can be written as
\begin{equation}
\label{eq:best-n-term-barycenter}
P_{\cA_N^n}(\ma)=
\argmin_{\mgI \in \cA_N^n} W_2(\mgI, \ma) =
 \bary(\weight^n_N(\ma), \barset_N),
\end{equation}
where $\weight^n_N(\ma)$ denotes the optimal $n$-sparse vector of barycentric weights given by
\begin{equation}
\weight^n_N(\ma) ~\in~ \argmin_{\weight^n_N \in \Sigma_N^n} W_2^2(\ma, \bary(\weight_N^n, \barset_N))~.
\label{eq:best-n-term-lambda}
\end{equation}
We stress that since $\weight^n_N(\ma) \in \Sigma_N^n$, it is an $n$-sparse vector in $\bR^N$ that reads
$$
\weight^n_N(\ma) = (0, \dots, 0, \weightcomp_{i_1}, 0, \dots, 0, \weightcomp_{i_n}, 0,\dots)\quad \text{with } \sum_{k=1}^n \weightcomp_{i_k} =1~,\quad 0\leq \weightcomp_{i_k}\leq 1~,\; \forall k\in\{1,\dots,n\}~.
$$
Denoting $I_N^n(\ma) = \{ i_1,\dots, i_n \}$ the set of non-zero indices, we can set
\begin{equation}
\label{eq:adaptive-weights}
\weight_n(\ma) = \{ \weightcomp_i \}_{i \in I_N^n(\ma)} \in \Sigma_n~,
\quad
\barset_N^n(\ma) = \{ \barfun_i \}_{i \in I_N^n(\ma)}
\end{equation}
and equivalently express $P_{\cA^n_N}(\ma)$ as
$$
P_{\cA^n_N}(\ma) = \bary(\weight_n(\ma), \barset_N^n(\ma))~.
$$
This writing emphasizes the fact that the best $n$-term barycenter is defined only with a reduced number $n$ of reference functions from $\barset_N$ and their $n$ associated weights in the simplex of $\bR^n$ (and not $\bR^N$). Also, note that this approximation is \textit{adaptive} since, for each $\ma\in \cY$, we retain the elements from $\barset_N$ that are best suited to this target measure. In section \ref{sec:algo-best-n-term}, we develop numerical algorithms to estimate the best $n$-term barycentric approximation of a given target measure $\ma\in \cY$ by computing the weights $\weight_N^n(\ma)$.

To emphasize the important difference between working with best $n$-term barycenters from $\barset_N$, and working with the best barycenter from a fixed subset $\barset_N^n \subseteq \barset_N$ of size $n\leq N$, we can remark the following. Since $\Sigma_N^n$ contains $\binom{N}{n}$ $n$-simplices, working with $\cA_N^n$ means that we are picking the best approximation among $\binom{N}{n}$ subsets $\barset_N^n$ os size $n$. This is expected to dramatically improve the potential of approximation. In our numerical computations, we have values that are around $N=100$ and $n=10$, thus we are working with about $1.7.10^{14}$ subsets $\barset_N^n$.

\subsection{Structured prediction with $\cA_N^n$ and best $n$-term barycenter}
\label{sec:structured-pred-with-class}

For structured prediction, we work with the class of $n$-sparse barycenters generated by the measures $\rY_N$ from the training set. In other words, we set $\barset_N = \barsetsp_N$, and we work with
$$
\cA_N^n = \bary(\Sigma_N^n, \rY_N).
$$

We can now apply the general framework on optimal approximation maps from section \ref{sec:optimal-approx} for our selected class. From \eqref{eq:optimal-map} and \eqref{eq:best-n-term-barycenter}, it follows that an optimal reconstruction map is
$$
a^*(x)
= P_{\cA_N^n} \left( f(x) \right)
= \argmin_{\mgI \in \cA_N^n} W_2(\mgI, f(x))
= \bary(\weight^n_N(f(x)), \barsetsp_N)
, \quad \forall x\in \cX,
$$
where the weights satisfy
\begin{equation}
\weight^n_N(f(x)) ~\in~ \argmin_{\weight^n_N \in \Sigma_N^n} W_2^2(f(x), \bary(\weight_N^n, \barsetsp_N))~.
\label{eq:best-n-term-lambda-fx}
\end{equation}
The above equations shows that computing the weights $\weight^n_N(f(x))$ from the optimal map $a^*(x)= P_{\cA_N^n} \left( f(x) \right)$ inevitably requires the full knowledge of $f(x)$. Except for the input $x\in \rX_N$, this computation cannot be performed because we only have access to the input $x$ at runtime. We thus need to find a strategy to build sparse weights $\widehat \omega_N^n(x)$ such that
\begin{equation}
\label{eq:approx-weights}
\widehat \omega_N^n(x) \approx \omega_N^n( f(x) ), \quad \forall x\in \cX.
\end{equation}
In Section \ref{sec:interp-weights}, we develop an adaptive interpolation strategy to build $\widehat \omega_N^n(x)$. It is based on the construction of a Euclidean embedding in which we learn a metric in the inputs that mimics distances between the outputs. The metric is invariant to reparametrizations of the input such as rotations, translations and dilations. We claim that the approach is interpolatory because our construction will be such that
\begin{equation}
	\widehat \omega_N^n(x_i) = \omega_N^n( f(x_i) ) = e_i, \quad \forall i\in \{1,\dots, N\},
\end{equation}
where $e_i$ is the $i$-th unit vector of $\bR^N$. This will ensure that our final approximation $a$ satisfies
$$
a(x) = f(x), \quad \forall x\in \rX_N.
$$

\subsection{Additional comments}
Before moving to the next section, several comments are in order:
\paragraph{Sources of Error:} In Section \ref{sec:optimal-approx}, we explained that any approximation map $a:\cX\to \cA$ will inevitably suffer from two error sources. When working with the class $\cA_N^n$ from \eqref{eq:n-sparse-bary}, they can be understood as follows:
\begin{itemize}
	\item The model error is connected to the quality of the class $\cA_N^n$ to approximate $f(\cX)$. As $N$ increases, the approximation quality of $\cA_N^n$ improves. In fact, $\cE^\star(a^*)$ tends to 0 as $N\to \infty$ but it has so far not been possible to build a theory on the convergence rate of this quantity as a function of $N$. This point is left as an open problem. Also, as we explain in the next paragraph, if we fix $N$ and we increase $n$, the quality of approximation will increase but we will lose in sparsity.
 \item The algorithmic error is linked to our ability to approximate the best sparse weights as expressed in \eqref{eq:approx-weights}.
\end{itemize}

\paragraph{Performance hierarchy:}
Denoting 
$$
\sigma(\ma, \class_N^n) ~\coloneqq~ \min_{\mgI \in \class_N^n} W_2(\ma, \mgI)~.
$$
the best approximation error of $\alpha$ with $\cA_N^n$, and using that the sequences are nested in the sense that
$$
\class_N^n \subseteq \class_N^m \subseteq \class_N^N = \bary(\Sigma_N, \barset_N), \quad 1\leq n\leq m \leq N~,
$$
we derive the hierarchy of approximation errors
\begin{equation}
\label{eq:nested-approx}
\sigma(\ma, \class_N^n)
\geq \sigma(\ma, \class_N^m)
\geq \sigma(\ma, \class_N^N), \quad 1\leq n\leq m \leq N~.
\end{equation}
These inequalities higlight the fact that, from an approximation perspective, the set of $n$-sparse barycenters from $\class_N^n=\bary(\Sigma_N^n, \barset_N)$ is less desirable than the full $N$-dimensional polytope $\class_N^N=\bary(\Sigma_N, \barset_N)$. However, for the reasons outlined above, trade-offs between accuracy and interpretability or numerical performance motivate the use of sparse barycenters when $N\gg1$.

\paragraph{Why do we call $P_{\cA_N^n}(\alpha)$ the best $n$-term barycenter of $\alpha$?} The terminology echoes the well-defined notion of best $n$-term approximation on a Hilbert space $(V, \Vert \cdot \Vert_V)$. It expresses the best approximation that one can achieve when approximating a function from a redundant dictionary $\cD \subset V$. Among the usual properties required of the dictionary stand that it is complete, namely that $V = \overline{\vspan{\cD}}$. For a given $\alpha\in V$, one is then interested in expressing $\alpha$ as a linear combination of at most $n$ elements:
\begin{equation}
\label{eq:sn}
\ma_n = \sum_{g \in \cD} \weight_g g ~,\quad \#\spt(\weight)\leq n~.
\end{equation}
The best approximation error in this case is thus
$$
\sigma(\ma, \cD) \coloneqq \inf_{\ma_n} \Vert \ma - \ma_n \Vert
$$
where the approximations $\ma_n$ are of the form \eqref{eq:sn}.

In our setting, the set of measures from the dataset $\rY_N$ plays the role of the dictionary $\cD$. Unlike $\cD$, our dataset $\rY_N$ is not assumed to be complete nor dense in $\Pr$.

\section{Numerical algorithms for best $n$-term approximation}
\label{sec:algo-best-n-term}

In this section, we present algorithms that approximate the optimal barycentric weights $\weight_N^n(\ma)$ from problem \eqref{eq:best-n-term-lambda} which we recall here:
\begin{equation}
\weight^n_N(\ma) ~\in~ \argmin_{\weight^n_N \in \Sigma_N^n} W_2^2(\ma, \bary(\weight_N^n, \barset_N))~.
\label{eq:best-n-term-lambda-2}
\end{equation}
A salient feature of the proposed algorithms is that they always involve computations of barycenters with a reduced number $n$ of components, therefore avoiding the expensive computation of barycenters with $N\gg1$ components. As our numerical experiments will illustrate, the algorithms produce good approximations of the optimal sparse weights. Also, as explained in Section \ref{sec:structured-pred-with-class}, $\weight^n_N(\ma)$ cannot be computed in the framework of structured prediction when $\alpha = f(x)$. However, deriving an algorithm to solve the best $n$-term approximation is of interest in its own right, and it can be used for benchmarking purposes for structured prediction. 

Going beyond the fact that the minimizer of \eqref{eq:best-n-term-lambda-2} is not unique, the optimization problem is challenging due to the non-convexity of the objective function. We also note that the constraint that our vector of weights $\weight_N^n$ belongs to $\Sigma_N^n$ is not trivial to handle (note that $\Sigma_N^n$ is a union of $n$-simplices embedded in $\bR^N$). A tempting strategy could be to approximate this requirement by the sum of a hard constraint that $\weight_N^n$ belongs to the $N$-simplex $\Sigma_N$ and a soft $\ell_1$-relaxation to promote sparsity. However, this idea would not work because for any $\weight_N\in \Sigma_N$, we have $\Vert \weight_N \Vert_{\ell_1(\bR^N)} = 1$ independently of the sparsity pattern of $\weight_N$. Since standard $\ell_1$ penalization cannot enforce sparsity in our case, we opt for a direct approach based on a projected gradient descent onto the set $\Sigma_N^n$, implemented using the Greedy Selector and Simplex Projector algorithm (GSSP) from \cite{KBCK2013}.

Our first possible algorithmic variant, summarized in Algorithm \ref{alg:GraFS}, fixes a sparsity degree $n$ and performs a simple gradient descent step followed by a projection onto $\Sigma_N^n$. In this algorithm,
\begin{align}
P_{\Sigma_N^n} : \bR^N &\to \bR^N \\
v &\mapsto P_{\Sigma_N^n} v \coloneqq \arg \min_{z \in \Sigma_N^n} \Vert v - z \Vert^2_2
\end{align}
denotes the (Euclidean) orthogonal projection into  $\Sigma_N^n$ (computed with GSSP in practice). We call this variant \PG~for \underline{P}rojected \underline{G}radient.

We formulate several variants of Algorithm~\ref{alg:GraFS}. Instead of fixing the parameter $n$ for all iterations, one can set this parameter equal to the length of the support at each iteration: this corresponds to Algorithm~\ref{alg:GAS}, which we call \GAS~for \underline{G}radient descent with \underline{A}daptive \underline{S}upport. Moreover, motivated by the Gradient Support Pursuit proposed in \cite{bahmani2013greedy} in the context of compressed sensing, we present a \underline{R}estricted \underline{G}radient \underline{S}upport \underline{P}ursuit (\RGSP) strategy in Algorithm~\ref{alg:RGSP}. In that algorithm, the first step in each iteration is to evaluate the gradient $z = \nabla_\weight \cL(\ma, \bary(\weight, \barset_N)) \in \bR^N$ of the cost function at the current estimate $\omega$. Then we pick the $2n$ coordinates of $z$ that have the largest magnitude. These coordinates are chosen as the directions in which pursuing the minimization is the most effective. Their indices, denoted by $\cZ$, are then merged with the support of the current estimate to obtain the set of indices $\cS$. This is a set of at most $3n$ indices over which the loss function is then minimized to produce an intermediate estimate $\tilde \omega \in \Sigma_N^{3n}$. Note that this step will typically involve computing barycenters with at most $3n$ components, and  derivatives of at most $3n$ direction components. Finally, we project $\tilde\omega$ onto $\Sigma_N^n$.  


\begin{algorithm}[htp] 
	\SetKwInput{KwData}{Input}
	\SetKwInput{KwResult}{Output}
	\SetCustomAlgoRuledWidth{2cm}
	 \KwData{Target measure $\ma$, reference set $\barset_N$, sparsity degree $n$, learning rate $\tau>0$.}
	 \KwResult{Approximation of $\weight_N^n(\ma)$}
	 Initialize $\weight \in \Sigma^n_N$\;
	 \Repeat{convergence}{
	$\weight \gets P_{\Sigma_N^n} \left( \weight - \tau \nabla_\weight \cL(\ma, \bary(\weight, \barset_N)) \right)$
	}
	 \caption{\underline{P}rojected \underline{G}radient descent with fixed support $n$ (\PG)}
	 \label{alg:GraFS}
\end{algorithm}

\begin{algorithm}[htp]
	\SetKwInput{KwData}{Input}
	\SetKwInput{KwResult}{Output}
	\SetCustomAlgoRuledWidth{2cm}
	\KwData{Target measure $\ma$, reference set $\barset_N$, learning rate $\tau>0$, sparsity threshold $n_{\max}$.}
	\KwResult{Approximation of $\weight_N(\ma)$ with sparsity degree $n\leq n_{\max}$ obtained dynamically.}
	Initialize $\weight \in \Sigma^{n_{\max}}_N$\;
	\Repeat{convergence}{
	\begin{equation*}
		\widetilde \weight \gets  \weight - \tau \nabla_\weight\, \cL(\ma, \bary(\weight, \barset_N))
	\end{equation*}
	\begin{equation*}
		n \gets \min(n_{\max}, \vert \spt ( \widetilde\weight)\vert)
	\end{equation*}
	\begin{equation*}
		\weight \gets P_{\Sigma_N^n}(\widetilde{\weight})
	\end{equation*}
	
	}
	\caption{\underline{G}radient descent with \underline{A}daptive \underline{S}upport (\GAS)}
	\label{alg:GAS}
\end{algorithm}

\begin{algorithm}[htp]
	\SetKwInput{KwData}{Input}
	\SetKwInput{KwResult}{Output}
	\SetCustomAlgoRuledWidth{2cm}
	\KwData{Target measure $\ma$, reference set $\barset_N$, sparsity degree $n$, learning rate $\tau>0$.}
	\KwResult{Approximation of $\weight_N(\ma)$ with sparsity degree obtained dynamically.}
	Initialize $\weight \in \Sigma^{n}_N$ \;
	\Repeat{convergence}{
		Compute the gradient:
		$z = \nabla_\weight \cL(\ma, \bary(\weight, \barset_N)) \in \bR^N$ 
		
		Identify the indices of the $2n$ largest components of $z$:
		$\cZ = \spt \left( z_{2n}\right) $ 
		
		Merge supports:
		$ \cS = \cZ \cup \spt \left(\weight\right)$

		Solve
		$$
			\tilde \omega \gets \argmin_{ \substack{\omega \in \Sigma_N \\ \omega |_{\cS^c} = 0 } } \cL(\ma, \bary(\weight, \barset_N))
		$$

		
	 	Projection onto the sparse simplex: $\omega = P_{\Sigma_N^n} (\tilde \omega)$
		
	}
	\caption{\underline{R}estricted \underline{Gra}dient \underline{S}upport \underline{P}ursuit (\RGSP)}
	\label{alg:RGSP}
\end{algorithm}

\newpage

\section{Effective interpolation algorithms for structured prediction}
\label{sec:interp-weights}
In the framework of structured prediction, given an input $x\in \cX$ we cannot approximate $f(x)$ with the best $n$-term barycenter $P_{\cA_N^n}f(x)$ since the computation of the weights $\weight_N^n(f(x))$ requires the full knowledge of $f(x)$, and this information is only available for the inputs $x$ from the training set $\rX_N$. Our main challenge is to extend this information to $x$ in a robust and reliable way. We now present several strategies to compute such a vector of surrogate weights before performing an experimental evaluation in Section~\ref{sec:numerical_experiments}. In Section \ref{sec:other-interp}, we recall the two main existing approaches from the literature. Section \ref{sec:LEE} describes a novel approach that we propose as an improvement.

\subsection{Baseline methods}
\label{sec:other-interp}
\paragraph{Truncated Nadaraya--Watson kernel interpolation:} 
A classical strategy to define barycentric interpolation weights is to rely on a positive kernel function $K:\cX\times \cX\to \bR_+$. Common choices include the Gaussian kernel with deviation $\sigma > 0$:
\begin{equation}
K(x,\tilde x)~=~\exp(-\|x-\tilde x\|^2_{\ell_2(\bR^d)} / 2\sigma^2)
\label{eq:K-NS}
\end{equation}
and the inverse distance weight with exponent $p\geq 1$ and regularization $\eta > 0$:
\begin{equation}
K(x,\tilde x) ~=~ (\eta+\Vert x - \tilde x\Vert_{\ell_2(\bR^d)})^{-p}~.
\label{eq:K-IDW}
\end{equation}
In order to create an $n$-sparse vector of barycentric interpolation weights $\weight_N^n(x)$ for a given input parameter $x\in X$, we then proceed in two steps:
\begin{enumerate}
	\item Compute the set $\cN_n(x)\subset [1,N]$ of indices that correspond to the $n$-nearest neighbors of $x$ in the dataset $X_N=\{x_1, \dots, x_N\}$, for the Euclidean distance $\ell_2(\bR^d)$ on the space of parameters. 
	\item Create a $n$-sparse vector $\widehat \weight_N^n(x)$ whose $i$-th coordinate is equal to $0$ if $i$ does not belong to the set of neighbors $\cN_n(x)$, and is otherwise equal to:
	\begin{equation}
	\label{eq:nadaraya}
	(\widehat \weight_N^n(x))_i \coloneqq \frac{ K(x, x_i) }{\sum_{j\in \cN_n(x)} K(x, x_j)}\; \in [0, 1]~.
	\end{equation}
\end{enumerate}

Implementing this heuristic is straightforward, but raises the difficult question of the choice of a kernel function $K$ on the space of input parameters. Moreover, since this method relies on the Euclidean metric on the space of parameters $\cX$ instead of the Wasserstein metric on the space of output distributions $\cY$, it is not invariant to the parameterization of the input space $\cX$. This is problematic for applications to model order reduction, where vectors of input parameters $x \in\cX$ usually correspond to physical parameters whose units and scalings may vary.

We note that as a refinement of this approach, one may consider using a kernel ridge regression method to derive the barycentric weights. An issue that often arises is that the positivity of the weights is no longer ensured and one needs to resort to thresholding in practice. Overall, the resulting algorithm has similar merits and limitations to the use of \eqref{eq:nadaraya}. We omit a detailed presentation from the main text of the paper and refer the interested reader to \cite{CRR2016, CRR2020}.

\paragraph{Barycentric Greedy Algorithm (\BGA), a sparse but non-adaptive approach:}
One of the main challenges of interpolation methods based on Wasserstein barycenters is the choice of the support of the vector of weights $\weight_N^n$. As discussed above, relying on the set of $n$-nearest neighbors of a sampling point $x\in\cX$ for the Euclidean metric may be problematic. In order to bypass the  questionable choice of a parameterization of the space of input parameters $\cX$, the authors of \cite{ELMV2020, BBEELM2022} propose an optimality criterion that only makes use of the Wasserstein-2 distance on the output space $\cY = \cP(\Omega)$. These two methods present a greedy algorithm that selects a \emph{fixed} subset $\rY_n$ of $n$ measures that is most representative of the training set $\rY_N$:
\begin{enumerate}
	\item Initialization (n=2): Find a pair of input parameters $(x_1, x_2)\in X_N \times X_N$ in the dataset with:
	$$
	(x_1,x_2) \in \argmax_{(x, \tilde x)\in X_N\times X_N} W_2(f(x), f(\tilde x))~.
	$$
	Use the two extremal distributions of our dataset as reference measures by setting $\rY_2 \coloneqq \{ f(x_1), f(x_2) \}=\{y_1,y_2\}$.
	\item Induction step: For $n\geq 3$, assume that we have computed $\rY_{n-1} = \{y_1,\dots, y_{n-1}\}$. Search for a data sample $y_n = f(x_n)$ that is as far as possible from the Wasserstein polytope induced by $\rY_{n-1}$, i.e. choose:
	\begin{equation}
	x_n \in \argmax_{x\in X_N} \min_{\weight_{n-1}\in \Sigma_{n-1}} W_2^2(f(x), \bary(\weight_{n-1}, \rY_{n-1}))
	\label{eq:greedy-induction}
	\end{equation}
	and set $\rY_n = \rY_{n-1} \cup \{ y_n \}$ with $y_n = f(x_n)$.
\end{enumerate}

This generalized farthest point sampling produces a fixed set $\rY_n$ whose Wasserstein convex hull is most representative of the full training set $\rY_N$.
Then, for every new target $x\in X$, the authors of \cite{ELMV2020, BBEELM2022} propose to use as output distribution the barycenter:
$$
\map(x) = \bary(\widetilde \weight_n(x), \rY_n)~\simeq~ f(x)~,
$$
where the vector of $n$ weights $\widetilde\weight_n(x)\in \Sigma_n$ of dimension $n$ is built with an interpolation procedure. The core idea of \cite{ELMV2020, BBEELM2022} is to compute optimal barycentric weights $\weight_n(x_i)$ in the $n$-simplex $\rY_n$ for the training distributions $\rY_N=\{f(x_1), \dots, f(x_N)\}$, and then build an interpolating function  $x\mapsto \widetilde \weight_n(x)$ such that
$\weight(x_i)=\widetilde \weight(x_i)$ for $i=1,\dots, N$.

\subsection{Fully invariant regression with Local Euclidean Embeddings}
\label{sec:LEE}

\paragraph{Adaptive, sparse algorithms:} 
Both of the baseline methods discussed above have clear limitations. On the one hand, kernel-based interpolations rely on the choice of a relevant parameterization of the space of input parameters $x$ and of a suitable kernel function with appropriate choice of hyperparameters. This requires some expertise from end-users and may be a barrier to adoption. On the other hand, the \BGA~method of \cite{ELMV2020, BBEELM2022} selects reference measures using an interpretable geometric criterion on the space of output distributions $y$, but may be unable to model complex data distributions with a single Wasserstein simplex of dimension $n$.
In order to define a new parameter-free method for regression in Wasserstein space, we propose to rely instead on \emph{adaptive} Wasserstein $n$-simplices and on a locally Euclidean (Riemannian) metric on parameter space $\cX$ that approximates the \emph{pull-back} of the Wasserstein-2 metric from $\cY$ by the application $f:\cX\rightarrow \cY$.

\paragraph{Metric approximation of the Wasserstein projection problem:}
First, let us consider a given parameter $x\in X$ and the  optimization problem:
\begin{equation}
\label{eq:weights-EE-v1}
\min_{\weight_N^n \in \Sigma_N^n}
\sum_{i=1}^N \big\vert\, W_2^2\big(f(x), f(x_i)\big) ~-~ W_2^2\big(\bary(\weight_N^n, \barsetsp_N), f(x_i)\big)  \,\big\vert~.
\end{equation}
This problem is similar to the projection of $f(x)$ on $\cA_N^n$ that we presented in \eqref{eq:best-n-term-lambda-fx}: we look for a barycenter $\bary(\weight_N^n, \barsetsp_N)$ whose distances to the data samples $f(x_i)$ are as close as possible to the ground truth distances $W_2\big(f(x), f(x_i)\big)$.

The main advantage of this metric formulation is that even if $f(x)$ is unknown, we may still approximate the terms related to $f(x)$ with local quadratic functions that will act as local Euclidean embeddings:
$$
W^2_2(f(x), f(x_i)) \approx (x-x_i)^T M(x_i) (x-x_i)~,\quad \forall i\in \{1,\dots, N\}~.
$$
In the equation above, $M(x_i)$ is a positive definite matrix of size $d\times d$.

\paragraph{Computation of the Local Euclidean Embeddings:} 
The collection of local Euclidean metrics $M(x_i)$ defines a data-driven approximation of the pull-back metric of the Wasserstein-2 distance by the application $f:\cX\rightarrow\cY$. We compute this field of $d\times d$ matrices during an offline training phase. First, we obtain the $N\times N$ matrix of pairwise Wasserstein-2 distances between the elements $y_i = f(x_i)$ of our training set $\rY_N$:
$$
D = ( d_{i,j} )_{1\leq i, j \leq N}~, \quad d_{i,j} \coloneqq W_2(y_i, y_j)~.
$$
Then, for every training data pair $(x_i, y_i)\in \rS_N$, we use CvxPy \cite{diamond2016cvxpy,agrawal2018rewriting} to solve:
\begin{equation*}
	M^*(x_i) \in \argmin_{M \succcurlyeq 0} \sum_{j=1}^N \big(\, (x_i-x_j)^T (M+\eta \Id) (x_i-x_j) ~-~ d_{i,j}^2 \,\big)^2~,
\end{equation*}
and use as local Euclidean metric $M(x_i)$ the solution:
\begin{equation}
\label{eq:learned-EE}
M(x_i) = M^*(x_i) + \eta \Id~.
\end{equation}
In the above formulas, $\Id$ is the $d\times d$ identity matrix and $\eta>0$ is a regularization parameter. The latter is introduced to mitigate conditionning issues and guarantee that the resulting matrix $M(x_i)$ has full rank.

\paragraph{Adaptive projection:}
This approximation yields the tractable optimization problem for the interpolation weights:
\begin{equation}
\label{eq:weights-EE-v2-full}
\weight^{n,\EE}_N(x) \in 
\min_{\weight_N^n \in \Sigma_N^n}
\sum_{i=1}^N \big\vert\, (x-x_i)^T M(x_i) (x-x_i) ~-~ W_2^2\big(\bary(\weight_N^n, \barsetsp_N), f(x_i)\big) \,\big\vert~,
\end{equation}
where the superscript $\EE$ stands for ``Euclidean Embedding''. Following Section \ref{sec:algo-best-n-term}, we can solve this optimization problem using a projected gradient descent. In order to speed up computations for large values of $N$ and reduce the influence of points $x_i$ that are too far away from $x$, we propose to truncate the sum above to $k$ terms with $n\leq k\leq N$. To make our interpolation process local without relying on the parameterization of the input space $\cX$, we use the $k$ points $x_i$ that are associated to the smallest values of $(x-x_i)^T M(x_i)(x-x_i) \approx W_2^2(f(x), f(x_i))$.

\paragraph{Motivations and properties:}

Our data-driven approach to compute the barycentric weights $\weight_N^{n, \EE}(x)$ has the following properties:
\begin{itemize}
	\item \textbf{Interpolation:} If $x=x_i$ for some $x_i$ in the training set, then $\weight_N(x)=e_i$ is a solution to \eqref{eq:weights-EE-v1}. We perfectly recover the target image since $\bary(\weight_N(x), \barsetsp_N) = \bary(e_i, \barsetsp_N) = y_i$.
	\item \textbf{Sparsity:} We never use our Wasserstein barycenter solver with more than $n$ data distributions at a time.
	\item \textbf{Full adaptivity:} The support of the vector of weights $\weight^{n, \EE}_N(x)$ is optimized with respect to $x$. 
	Instead of relying on a unique simplex $\barsetsp_n$ as in \cite{ELMV2020, BBEELM2022}, our regression method may use all possible $n$-simplices from $\barsetsp_N$ for different values of the input vector $x$.
	\item \textbf{Robustness to the input space $\cX$:} Thanks to the family of local Euclidean metrics $M(x_i)$, our method is robust to changes of coordinates for the input vectors $x$.
\end{itemize} 

We note that our metric re-formulation of the Wasserstein projection problem in \eqref{eq:weights-EE-v1} has some reproducing properties. To discuss them, we start by recalling the following result, which is crucially used in Karl Menger's works on characterizing metric spaces that are isometrically embedabble in finite dimensional Euclidean spaces (see \cite{BB2017}). 
\begin{lemma}
\label{lem:cayley}
Let $\cZ$ be a $q$-dimensional Euclidean space with inner product $\left< z, \tilde z  \right>$, norm $\Vert z \Vert = \sqrt{\left< z, z  \right>}$, and  distance $d_\cZ (z, \tilde z) = \Vert z - \tilde z\Vert$. Let $\rZ_k =\{ z_1,\dots, z_k\}$ be a set of $k\leq q$ linearly independent vectors of $\cZ$, and let
$$
\bary(\Sigma_k, \rZ_k) = \{ z = \sum_{i=1}^k \weightcomp_i z_i \cond \weightcomp\in \Sigma_k  \} \subset \cZ
$$
be the set of barycenters associated to $\rZ_k$. Let $a \in \bary(\Sigma_k, \rZ_k)$ and let $b\in \cZ$. If $d_\cZ(a, z) = d_\cZ(b, z)$ for all $z\in \rZ_k$, then $a=b$.
\end{lemma}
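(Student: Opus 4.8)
The plan is to reduce the lemma to a single affine-interpolation identity. The crucial (and essentially only) observation to make is that, although the maps $z\mapsto d_\cZ(z,a)^2$ and $z\mapsto d_\cZ(z,b)^2$ are quadratic, their \emph{difference} is affine: expanding the squared norms, the $\Vert z\Vert^2$ terms cancel, so the auxiliary function
\[
g(z) ~\coloneqq~ d_\cZ(z,b)^2 - d_\cZ(z,a)^2 ~=~ \Vert z-b\Vert^2 - \Vert z-a\Vert^2 ~=~ 2\left< z,\, a-b\right> + \Vert b\Vert^2 - \Vert a\Vert^2
\]
is an affine function on $\cZ$. Everything else is bookkeeping.

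First I would translate the hypothesis: the condition $d_\cZ(a,z) = d_\cZ(b,z)$ for all $z\in\rZ_k$ says precisely that $g(z_i)=0$ for every $i\in\{1,\dots,k\}$. Next, I would use the assumption $a\in\bary(\Sigma_k,\rZ_k)$ to write $a=\sum_{i=1}^k \weightcomp_i z_i$ with $\weightcomp\in\Sigma_k$, so in particular $\sum_{i=1}^k\weightcomp_i=1$. Since $g$ is affine and $a$ is an affine combination of the $z_i$, $g$ commutes with that combination:
\[
g(a) ~=~ g\!\Big(\sum_{i=1}^k\weightcomp_i z_i\Big) ~=~ \sum_{i=1}^k \weightcomp_i\, g(z_i) ~=~ 0 .
\]
On the other hand, evaluating $g$ directly at $z=a$ gives $g(a)=\Vert a-b\Vert^2 - \Vert a-a\Vert^2 = \Vert a-b\Vert^2$. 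Comparing the two expressions for $g(a)$ yields $\Vert a-b\Vert^2=0$, hence $a=b$.

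I do not anticipate any genuine obstacle here: the whole argument hinges on noticing that the difference of the two squared-distance functions is affine, after which the membership of $a$ in the barycentric hull closes the proof in one line. I note in passing that the argument uses neither the linear independence of the $z_i$ nor the nonnegativity of the weights $\weightcomp_i$ — only the normalization $\sum_i\weightcomp_i=1$, i.e. the fact that $a$ lies in the affine hull of $\{z_1,\dots,z_k\}$; the extra hypotheses in the statement are presumably there to match the form in which the result will be transported to the Wasserstein setting.
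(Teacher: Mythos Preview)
Your proof is correct and self-contained. The observation that $g(z)=\Vert z-b\Vert^2-\Vert z-a\Vert^2$ is affine in $z$ is exactly the right lever, and the subsequent evaluation at the affine combination $a=\sum_i\weightcomp_i z_i$ closes the argument cleanly. Your remark that only the normalization $\sum_i\weightcomp_i=1$ is used (not linear independence of the $z_i$, nor nonnegativity of the weights) is also accurate.

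As for comparison: the paper does not actually give a proof of this lemma. It is stated there as a classical fact (``we start by recalling the following result'') attributed to Menger's characterization of metric spaces isometrically embeddable in Euclidean space, with a reference in lieu of an argument. So your elementary direct proof is an addition rather than a paraphrase; it has the merit of being fully self-contained and of making transparent why the barycentric hypothesis on $a$ (rather than on $b$) is the one that matters.
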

In other words: in Euclidean spaces, distances to the vertices of a simplex uniquely determine locations on the simplex.
As a consequence of this result, we observe that our construction from \eqref{eq:weights-EE-v1} guarantees perfect reconstruction in some favorable cases as we record in the following corollary.
\begin{corollary}
\label{cor:diracs}
Let $\{x_i\}_{i=1}^k$ be a set of distinct points in $\bR^d$, and let $\rY_k = \{ \delta_{x_i} \}_{i=1}^k\subset \cY$. Let $\alpha \in \bary(\Sigma_k, \rY_k)$ and let $\beta=\delta_r \in \cY$ for $x\in \bR^d$. If $W_2(\alpha, \delta_{x_i}) = W_2(\beta, \delta_{x_i})$ for all $i\in\{1,\dots, k\}$, then $\alpha=\beta$. The same result holds true for a family of gaussians $\widetilde \rZ_k = \{ \cN(x_i, C) \}_{i=1}^k\subset \cY$ with constant covariance $C$.
\end{corollary}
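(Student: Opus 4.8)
The plan is to collapse the entire statement onto the Euclidean situation covered by Lemma~\ref{lem:cayley}, exploiting that for Dirac (and equal-covariance Gaussian) data the Wasserstein barycenter and the $W_2$ distance reduce to their elementary linear-algebra counterparts. Two identities carry the argument. First, the unique coupling of two Dirac masses is again a Dirac, so $W_2(\delta_a,\delta_b)=\|a-b\|$. Second, for $\weight\in\Sigma_k$ and $\rY_k=\{\delta_{x_i}\}_{i=1}^k$, every $\mgI\in\Pr$ satisfies
$$
\sum_{i=1}^k \weightcomp_i\, W_2^2(\mgI,\delta_{x_i})
= \int_\Omega\Big(\sum_{i=1}^k \weightcomp_i\|r-x_i\|^2\Big)\rd\mgI(r)
= \int_\Omega \|r-p\|^2\,\rd\mgI(r)+c(\weight),
$$
where $p\coloneqq\sum_{i=1}^k\weightcomp_i x_i$ and $c(\weight)=\sum_i\weightcomp_i\|x_i-p\|^2\geq 0$ does not depend on $\mgI$; the last expression is minimised exactly by $\mgI=\delta_p$, so $\bary(\weight,\rY_k)=\delta_p$. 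Hence every $\alpha\in\bary(\Sigma_k,\rY_k)$ equals $\delta_p$ with $p$ in the convex hull of $\{x_i\}$; writing $\beta=\delta_x$, the hypothesis $W_2(\alpha,\delta_{x_i})=W_2(\beta,\delta_{x_i})$ becomes $\|p-x_i\|=\|x-x_i\|$ for all $i$, and the conclusion $\alpha=\beta$ is precisely $p=x$.

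To read $p=x$ off Lemma~\ref{lem:cayley} I would lift to $\bR^{d+1}$ through $u\mapsto\tilde u\coloneqq(u,1)$. Since the appended coordinate is constant, $\|\tilde u-\tilde v\|=\|u-v\|$ for all $u,v$; and since barycentric weights sum to one, $\tilde p=\sum_{i=1}^k\weightcomp_i\tilde x_i\in\bary(\Sigma_k,\{\tilde x_i\}_{i=1}^k)$, while $\|\tilde p-\tilde x_i\|=\|p-x_i\|$ and $\|\tilde x-\tilde x_i\|=\|x-x_i\|$. Assuming --- as the ``vertices of a simplex'' picture suggests --- that the $x_i$ are affinely independent, the lifted vectors $\tilde x_i$ are linearly independent in $\bR^{d+1}$ and $k\leq d+1$, so Lemma~\ref{lem:cayley} applies with $\cZ=\bR^{d+1}$, $a=\tilde p$, $b=\tilde x$ and gives $\tilde p=\tilde x$, i.e.\ $p=x$, i.e.\ $\alpha=\beta$. (When the $x_i$ are merely distinct and affinely dependent, the same equalities still force $p=x$ by comparing distances to pairs $x_i,x_j$, so the statement survives.)

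For the Gaussian family the argument is verbatim the same once the Bures--Wasserstein formulas are recalled. Since $(C^{1/2}CC^{1/2})^{1/2}=C$ for $C\succcurlyeq 0$,
$$
W_2^2\big(\cN(m_0,C),\cN(m_1,C)\big)=\|m_0-m_1\|^2+\tr\big(2C-2(C^{1/2}CC^{1/2})^{1/2}\big)=\|m_0-m_1\|^2,
$$
and the Wasserstein barycenter of $\{\cN(x_i,C)\}_{i=1}^k$ with weights $\weight$ is $\cN(p,C)$ with $p=\sum_i\weightcomp_i x_i$: the mean is the weighted mean of the means, and $C$ solves the barycentric covariance fixed-point equation precisely because $(C^{1/2}CC^{1/2})^{1/2}=C$. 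Hence $m\mapsto\cN(m,C)$ converts the Gaussian problem into the Euclidean one treated above, and the conclusion transfers unchanged.

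The only spot calling for care is reconciling the corollary's hypotheses with those of Lemma~\ref{lem:cayley} --- the affine-to-linear lift into $\bR^{d+1}$ and the independence of the $x_i$ (with the degenerate case dispatched as above) --- together with the standard but easy-to-slip fact that the Bures trace term vanishes exactly when the two covariances coincide. Everything else is a routine computation.
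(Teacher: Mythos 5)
Your proof is correct and follows essentially the same route as the paper: identify the Dirac masses (resp.\ equal-covariance Gaussians) isometrically with points of $(\bR^d,\Vert\cdot\Vert_{\ell_2(\bR^d)})$ and invoke Lemma~\ref{lem:cayley}. The additional details you supply --- the bias--variance computation showing $\bary(\weight,\{\delta_{x_i}\}_{i=1}^k)=\delta_{p}$ with $p=\sum_i\weightcomp_i x_i$, the Bures--Wasserstein identity in the Gaussian case, and the lift to $\bR^{d+1}$ (plus the remark on affinely dependent configurations) to reconcile ``distinct points'' with the lemma's linear-independence hypothesis --- are sound and simply make explicit what the paper's two-line proof leaves implicit.
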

\begin{proof}
The proof is based on the fact that there is an isometric isomorphism between the set of Dirac masses $\{\delta_x\}_{x\in \Omega}$ in $(\cP_2(\Omega), W_2)$ with $(\bR^d, \Vert\cdot\Vert_{\ell_2(\bR^d)})$, and we have that $W_2(\delta_x, \delta_{\tilde x}) = \Vert x - \tilde x\Vert_{\ell_2(\bR^d)}$. The same holds true for $\{\cN(x, C)\}_{x\in \bR^d}$, the set of gaussians with constant covariance $C$. The proof then follows by applying Lemma \ref{lem:cayley} to the points $\{x_i\}_{i=1}^k$ in $(\bR^d, \Vert\cdot\Vert_{\ell_2(\bR^d)})$.
\end{proof}
Corollary \ref{cor:diracs} ensures basic reproducing properties of problem \eqref{eq:weights-EE-v1}. It implies that, in the very simple case where the function $f$ is defined as
\begin{align}
f: X &\to \cY \nonumber\\
x &\mapsto f(x) = \delta_x \text{ (or $\cN(x, C)$)} 
\end{align}
then $\barset_N$ is of the form $\barset_N =\{\delta_{x_i}\}_{i=1}^N$, and problem \eqref{cor:diracs} simply reads
\begin{equation}
\min_{\weight_N \in \Sigma_N} \sum_{i=1}^N \vert \Vert x - x_i \Vert^2_{\ell_2(\bR^d)} - \Vert \sum_{j=1}^N \weightcomp_j (x_j - x_i) \Vert^2_{\ell_2(\bR^d)} \vert^2 .
\label{eq:weights-EE-v1-diracs}
\end{equation}
This has the following implication. Suppose that $x$ is a given input and we have to approximate the target output $y = f(x)=\delta_x$. Suppose that $y$ is in the convex hull of $\barset_N$, namely there are weights $\weight_N(x)\in \Sigma_N$ such that $x = \sum_{i=1}^N \weightcomp_i(x) x_i$. Then corollary \ref{cor:diracs} guarantees the existence of a unique minimum in \eqref{eq:weights-EE-v1-diracs} which is attained at the best barycentric weights  $\weight_N(x)$. In other words, problem \eqref{eq:weights-EE-v1} recovers the exact weights in the case of Dirac masses. Note in addition that in this simple example there is an exact Euclidean embedding which is defined by taking the Gramm matrices $M(x)=\Id$. Therefore formulations \eqref{eq:weights-EE-v1} and \eqref{eq:weights-EE-v2-full} are exactly equivalent in this case (no approximation is added by the Eucliden embedding).

We can slightly generalize the above example an consider mappings $f(x) = \delta_{g(x)}$ or $f(x) = \cN(g(x), C)$, where $g(x)\coloneqq Ax +b$ is an affine transformation in $\bR^d$ with an invertible matrix $A\in \bR^{d\times d}$, and $b\in \bR^d$ is an offset vector. In that case, problem \eqref{eq:weights-EE-v1} also recovers the exact weights, and so does formulation \eqref{eq:weights-EE-v2-full} with $M(x)=A^T A$.

\section{Implementation}
\label{sec:implementation}

All the above routines require numerous computations of Wasserstein distances, barycenters, and the efficient computation of gradients with respect to distances and barycentric weights. To this end, we rely on a GPU implementation based on entropic regularization that enables fast automatic differentiation. We now recall the definition of the quantities that are computed by our solvers and provide some specific details about our implementation. Our code is available at the following address:
\begin{center}
\href{https://gitlab.tue.nl/20220022/sinkhorn-rom}{https://gitlab.tue.nl/20220022/sinkhorn-rom}
\end{center}

\subsection{Entropic regularization}

\paragraph{Discrete optimal transport:}
We work with discrete measures sampled on a 2D or 3D domain $\Omega$. We write two probability distributions $\ma, \mb \in \cP_2(\Omega)$ as weighted sums of Dirac masses:
\begin{equation*}
\ma = \sum_{i=1}^\textbf{N} \textbf{a}_i \delta_{\textbf{x}_i} \quad \text{and} \quad
\mb = \sum_{j=1}^\textbf{M} \textbf{b}_j \delta_{\textbf{y}_j} ~,
\end{equation*}
with sample locations $\textbf{x}_i$ and $\textbf{y}_j$ in $\mathbb{R}^d$
and where the vectors of weights $\textbf{a}_i$ and $\textbf{b}_j$ are non-negative and sum up to 1. We consider the squared Euclidean cost function $C(x,y)= \Vert x-y\Vert^2$ on the feature space $\mathcal{X}=\mathbb{R}^d$ and define the Monge-Kantorovich problem as:
\begin{equation}
\label{eq:discrete-OT}
\text{OT}(\ma,\mb) = \min_{\pi \in \Pi(\alpha,\beta)} \sum_{i,j} \pi_{i,j}C(\textbf{x}_i,\textbf{y}_j)
\end{equation}
where the set of admissible transport plans is defined by the constraints:
\begin{equation}
\quad \Pi(\ma,\mb) := \left\{ \pi \in \mathbb{R}_{+}^{N\times M} \quad \text{s.t.}: \pi\geq 0, ~\pi\mathbf{1}_M = \textbf{a},~ \pi^T\mathbf{1}_N = \textbf{b} \right\}~. 
\end{equation}
The Wasserstein-2 distance between the distributions $\ma$ and $\mb$ is defined as $W(\ma,\mb) := \sqrt{ \text{OT}(\ma,\mb)}$. We note that the solution of problem \eqref{eq:discrete-OT} is not necessarily unique, and that this large linear program may be hard to solve exactly for large values of \textbf{N}, \textbf{M} and $d$. 

\paragraph{Entropic optimal transport:}
In order to disambiguate this optimization problem while opening the door to fast parallel solvers, a common strategy is to add a smooth, strictly convex penalty to the linear objective $\langle \pi, C \rangle$ of~\eqref{eq:discrete-OT} \cite{peyre2019computational,feydy2020geometric}.
The entropy-regularized optimal transport problem reads:
\begin{align}\label{eq:OT_eps}
\text{OT}_{\varepsilon}(\ma,\mb) ~=~ \min_{\pi \in \Pi(\ma,\mb) } \langle\pi, C \rangle + 2 \,\varepsilon\, \text{KL} (\pi| \ma \otimes\mb )~,
\end{align}
where  $\text{KL}(u|v) =\sum_i u_i \log(u_i/v_i) -u_i +v_i$ denotes the Kullback-Leibler divergence  and  $\varepsilon>0$ is a hyperparameter that is homogeneous to the square of a distance and that we identify with the square of a blur radius $\sigma = \sqrt{\varepsilon}$. 
The convex optimization problem \eqref{eq:OT_eps} can be solved efficiently using fast iterative methods such as the Sinkhorn algorithm.

When $\varepsilon \rightarrow 0$, 
the entropic cost $ \text{OT}_{\varepsilon}(\ma,\mb)$ converges towards $\text{OT}(\ma,\mb)$.
However, for all positive values of $\varepsilon>0$, we must stress that $\text{OT}_{\varepsilon}$
does not induce a distance between probability distributions since $\text{OT}_{\varepsilon}(\mb,\mb) \neq 0$. To overcome this limitation and retrieve well-posed minimization problems for barycenters \cite{feydy2020geometric}, a common strategy is to use the de-biased Sinkhorn divergence defined by:
\begin{equation*}
S_{\varepsilon}(\ma,\mb)=  \text{OT}_{\varepsilon}(\ma,\mb)
-\tfrac{1}{2} \text{OT}_{\varepsilon}(\ma,\ma) -\tfrac{1}{2} \text{OT}_{\varepsilon}(\mb,\mb)~.
\end{equation*}
This formula defines a differentiable, positive and definite loss function that is convex with respect to each variable and behaves reliably for measure-fitting applications \cite{ramdas2017wasserstein,feydy2018global,feydy2019interpolating,feydy2020geometric,shen2021accurate}.

\subsection{Fast Sinkhorn barycenters}

Since the Sinkhorn divergence is convex with respect to both of its arguments, we can use it to approximate the Wasserstein barycenter problem as:
\begin{equation}
	\label{eq:sinkhorn_barycenter}
	\bary_\varepsilon(w_1, \dots, w_N\,;\, \barfun_1, \dots, \barfun_N)
	~=~
	\arg
	\min_{\beta \in \cP_2(\Omega)} \sum_{i=1}^N \weightcomp_i S_\varepsilon(\barfun_i, \beta)~.
\end{equation}
This problem is strictly convex with respect to $\beta$ when $\varepsilon > 0$ and admits a unique solution: entropic regularization smoothes out the technical difficulties that are caused by the genuine Wasserstein distance. Going further, this ``Sinkhorn barycenter'' problem has been studied in depth in \cite{janati2020debiased}, which proposed a de-biased Sinkorn solver.

In our numerical experiments, all probability distributions are sampled on a regular 2D grid with pixel size $\sigma$. We set a temperature $\varepsilon = \sigma^2$ for the entropic regularization and use Sinkhorn divergences and barycenters as drop-in replacements for the squared Wasserstein-2 distance and Wasserstein barycenters. As illustrated in \cite{feydy2020geometric}, such a small value of $\varepsilon$ corresponds to a negligible approximation error. Crucially, we rely on the fast solvers of the GeomLoss library \cite{feydy2019interpolating} to compute transport-related quantities, with gradients provided through automatic differentiation \cite{paszke2017automatic,feydy2020fast}.
These numerical routines implement a handful of recent advances which are critical to performance and accuracy:
\begin{enumerate}
\item As introduced in \cite{solomon2015convolutional}, we use a separable Gaussian convolution operator to reduce the time complexity of a Sinkhorn iteration from $O(N_\text{pixels}^2)$ to  $O(N_\text{pixels}^{3/2})$ in dimension $d=2$ and $O(N_\text{pixels}^{4/3})$ in dimension $d=3$.
\item As detailed in \cite{feydy2020geometric,feydy2020fast,charlier2021kernel}, we use the KeOps library to perform the Sinkhorn iterations in the log-domain with guaranteed numerical stability, optimal run times and a $O(N_\text{pixels})$ memory footprint on GPUs. 
\item As introduced in \cite{janati2020debiased}, we use de-biased iterations for the Sinkhorn barycenter problem.
\item As introduced in \cite{knight2014symmetry} and documented in \cite{feydy2020geometric}, we use symmetrized Sinkhorn iterations to accelerate convergence and guarantee that every call to a Sinkhorn divergence is invariant to the ordering of the input distributions.
\item Following ideas that were introduced in \cite{kosowsky1994invisible,merigot2011multiscale,levy2015numerical,schmitzer2019stabilized} and documented in \cite{feydy2020geometric}, we use an annealing and multiscale heuristic to speed up convergence of the Sinkhorn loops.
\end{enumerate}
These algorithmic ``tricks'' add up to a fast solver than we use to compute (approximate) Wasserstein barycenters of large collections of 2D and 3D distributions. We would like to note that even if the computation of Wasserstein barycenters is a provably hard problem in high-dimensional spaces \cite{altschuler2022wasserstein}, it is relatively easy to solve up to a set tolerance in spaces of dimension 2 and 3 \cite{altschuler2021wasserstein}. Working with distributions that are sampled on a fixed image grid alleviates the complex issue of the identification of the support of the Wasserstein barycenter and lets us focus on the (easy, convex) problem of finding optimal point masses for $\beta$ that minimize \eqref{eq:sinkhorn_barycenter}. 


\section{Numerical experiments}
\label{sec:numerical_experiments}
We now illustrate the performance of the different projection and regression methods that were discussed in this paper. The discussion is carried either on pedagogical toy datasets, or on relatively simple examples from Model Order Reduction. We divide our experiments in two main categories:
\begin{enumerate}
\item Best $n$-term approximation (from Section \ref{sec:algo-best-n-term}): we study the convergence and final accuracy of the proposed descent algorithms for projection on $\cA_N^n$.
\item Regression with sparse barycentric interpolation (from Section \ref{sec:interp-weights}):
\begin{itemize}
	\item Behavior of the baseline sparse methods that we presented in Section~\ref{sec:other-interp}.
	\item Performance of the adaptive, sparse strategy (\AS) that we proposed in Section~\ref{sec:LEE}.
	\item The best $n$-term approximation cannot be computed in real application scenarios, but serves as a benchmark for our regression methods as it gives the optimal performance that can be achieved with a barycenter of $n$ distributions from the training dataset.
\end{itemize}
\end{enumerate}

As a guiding example for our tests, we consider elements from $\Pr$ generated from a parametric two-dimensional viscous Burgers' equation. For all times $t\in [0, T]$ and all points $r\in \Omega=\bR^2$, we consider a solution $u(t, r)$ such that:

\begin{align*}
\partial_t u + \frac{1}{2}\partial_{r_1}(u^2) + \frac{1}{2}\partial_{r_2}(u^2) ~=~ \mb\Delta_r u~.
\end{align*}
To specify the initial condition, we define a square centered at a given point $c^\circ = (c^\circ_1, c^\circ_2)\in \bR^2$ with side length $w>0$:
$$
\rR = \{ r=(r_1,r_2)\in \bR^2\cond r_i \in [c^\circ_i - w/2, c^\circ_i+w/2],\, i=1,2 \}~.
$$
Then, we consider as initial condition to Burgers' equation the probability distribution:
\begin{align*}
u(0,r) = w^{-2}\charfun_{\rR} (r) 
= \begin{cases}
	| w|^{-2}\quad &\text{ if } r\in R, \\
	0\quad &\text{ otherwise.}
\end{cases}
\end{align*}
Solutions to this problem are nonnegative and belong to the space of integrable density functions $L^1(\bR^2)$. Mass is preserved in the sense that $\int_{\bR^2} u(t, r)\dr=1$ for all $t\in [0, T]$. Therefore for every $t\in[0, T]$, we understand the solution $u(t,\cdot)$ as the probability density of the measure $\rd \ma(t) = u(t, \cdot)\dr$ with $\dr$ being the Lebesgue measure. With a slight abuse of notation, we will say that $u(t,\cdot) \in \Pr$.

In the experiments below, we handle $t, c^\circ, w$ and $\mb$ as the parameters of our PDE solver. Vectors of input parameters read:
$$
x = (t, c^\circ_1, c^\circ_2, w, \mb)~.
$$
As parameter domain, we use:
\begin{equation*}
	X = \left\{x\in \bR^5: x \in [0, 5]\times [2,6]\times [2,6] \times [1,2]\times [5. 10^{-5},10^{-1}] \right\}.
\end{equation*}
For each $x\in X$, we then consider the associated solution (or ``snapshot''):
$$
y(x) \coloneqq u(t, \cdot\,; c^\circ, w, \mb) \in \cY=\Pr
$$
which is a probability measure in $\Pr$. Using a standard finite volume discretization, we can solve the PDE and generate a set of solutions:
$$
\rY \coloneqq \{ y(x) \in \Pr\cond x\in X \} \subset \cY~.
$$
In Figure~\ref{fig:some_fields}, we display some snapshots from $\rY$ that will be used as part of our training dataset $\barsetsp_N$. It is interesting to note that some snapshots look similar to each other, which illustrates possible redundancies in real-life datasets.

\begin{figure}[t]
	\centering
	\includegraphics[scale=0.6]{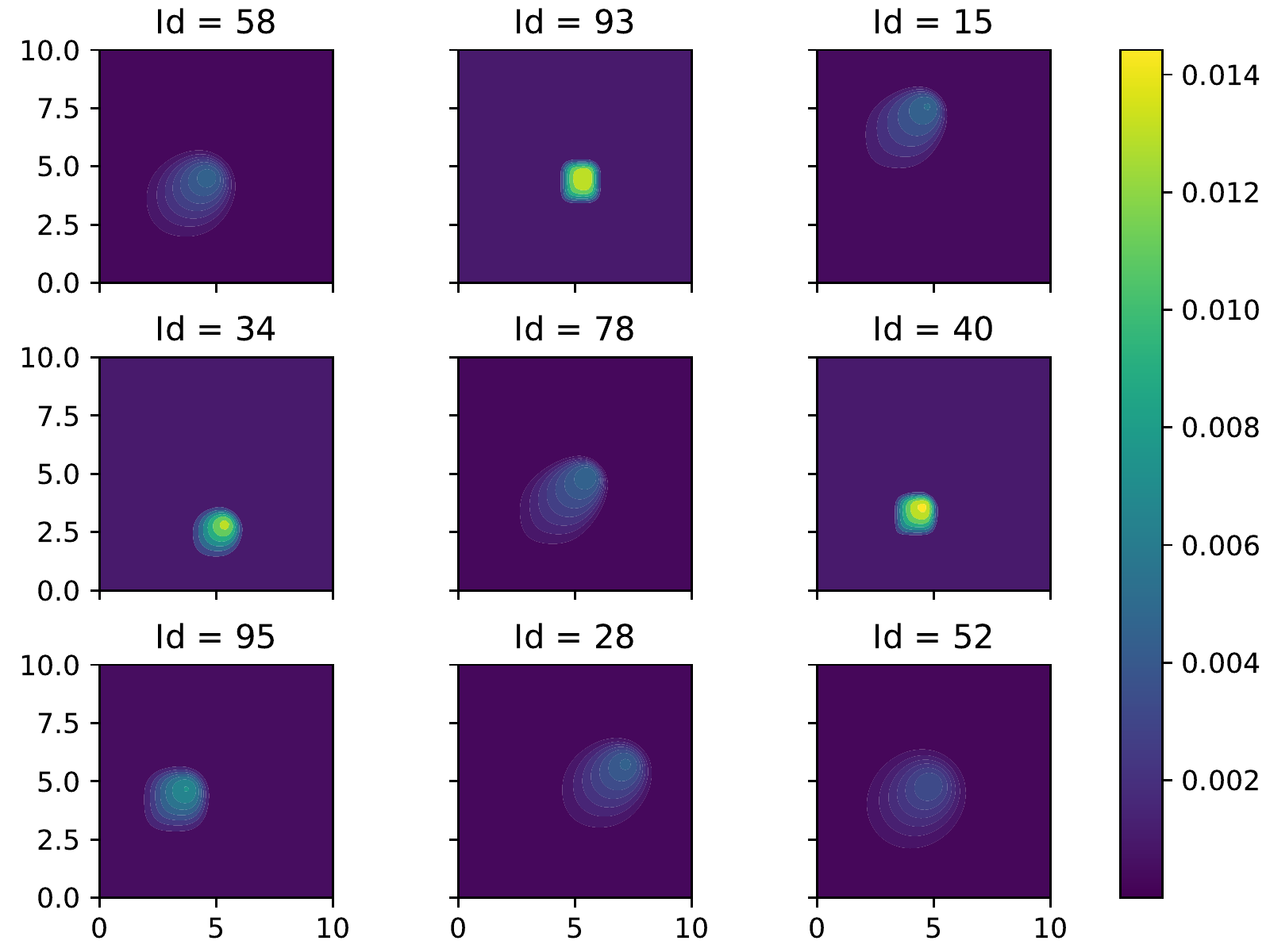}
	
	\caption{Some snapshots from the training set $\barsetsp_N$ generated by a numerical solver for Burgers' equation.}
	\label{fig:some_fields}
\end{figure}

\begin{figure}[p]
	\centering
	\subfloat[Reference distribution and predicted barycenter.]{\includegraphics[scale=0.46]{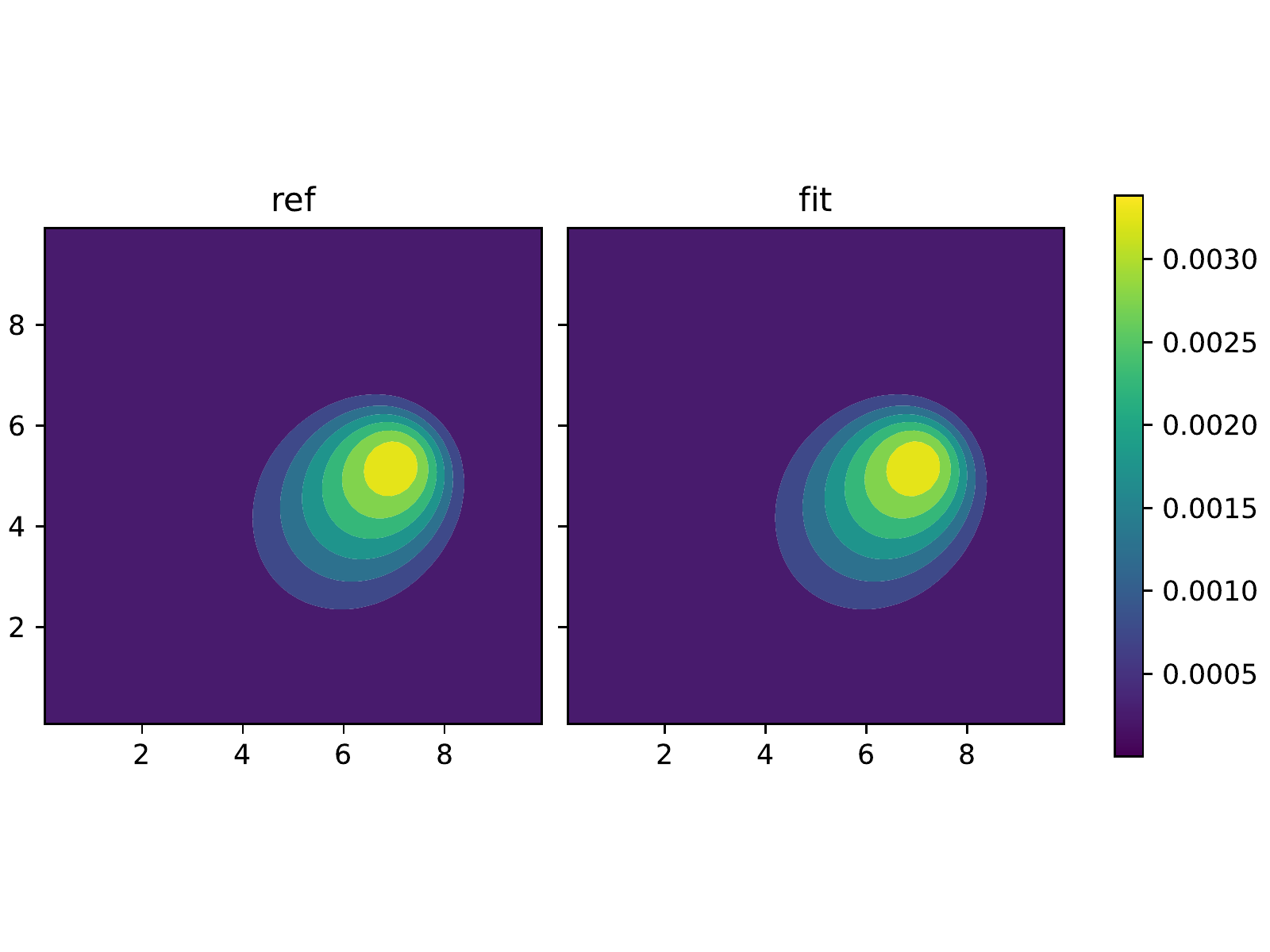}}\hspace{1cm}
	\subfloat[Predicted vector of weights.]{\includegraphics[scale=0.45]{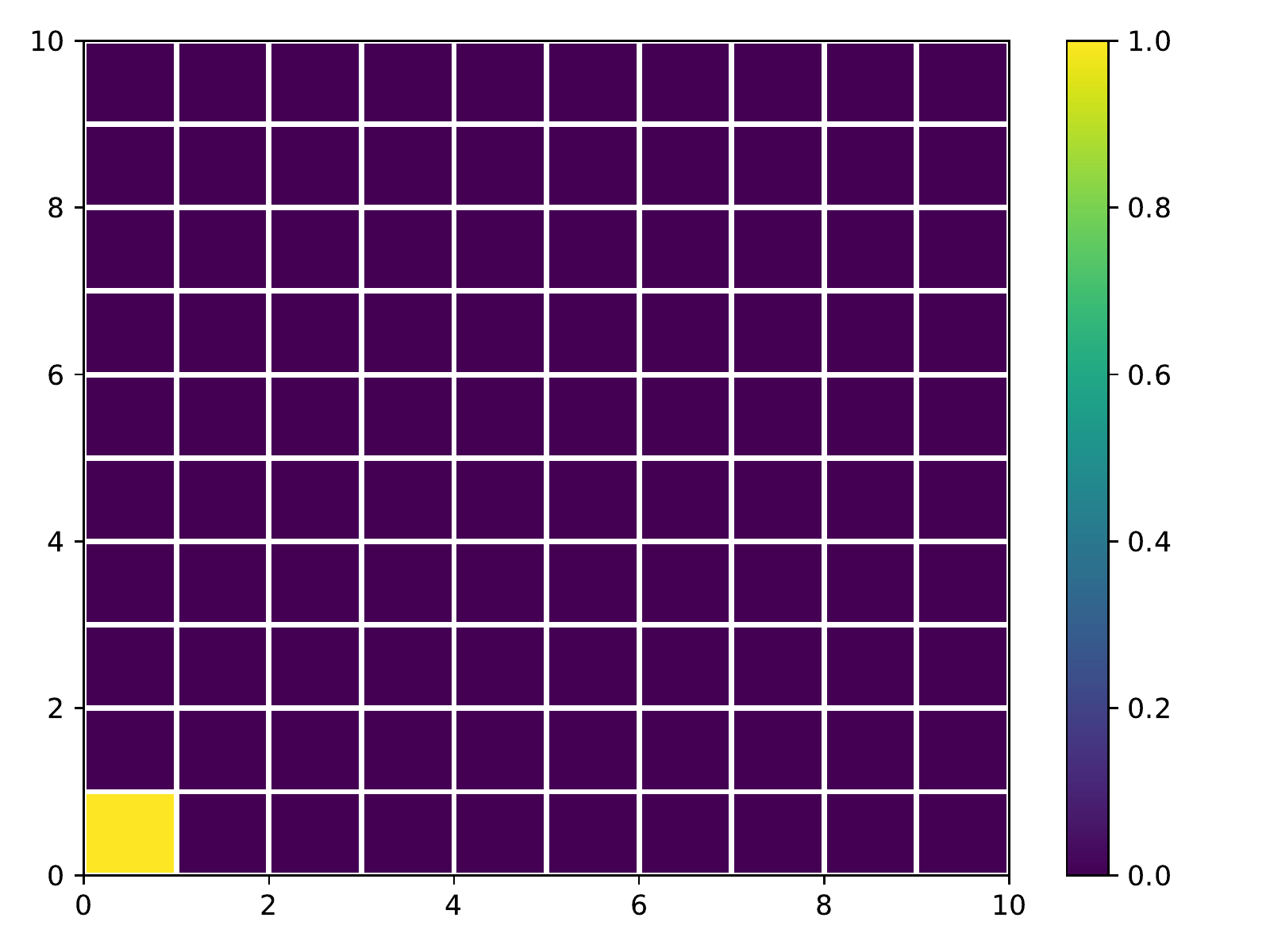}}
	\caption{ \textbf{Test 1.1:} We recover the exact vector of weights $(1,0,0,\dots)$.}
	\label{fig:PGD_in_prediction_weight}
\end{figure}

\begin{figure}[p]
	\centering
	\subfloat[Objective function.]{\includegraphics[scale=0.4]{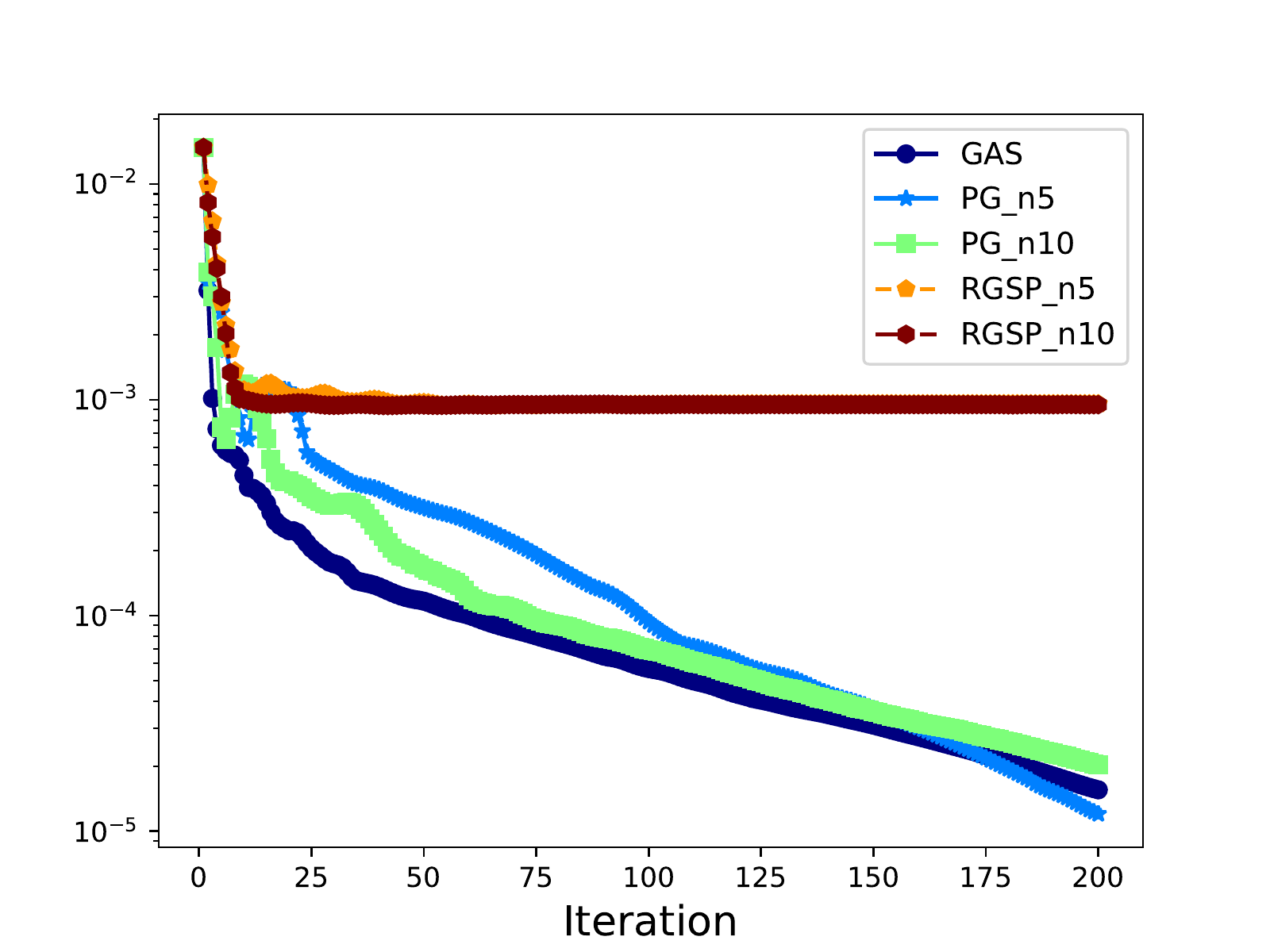} \label{fig:PGD_in_loss_support_a}}\hspace{1cm}
	\subfloat[Cardinal of the support.]{\includegraphics[scale=0.4]{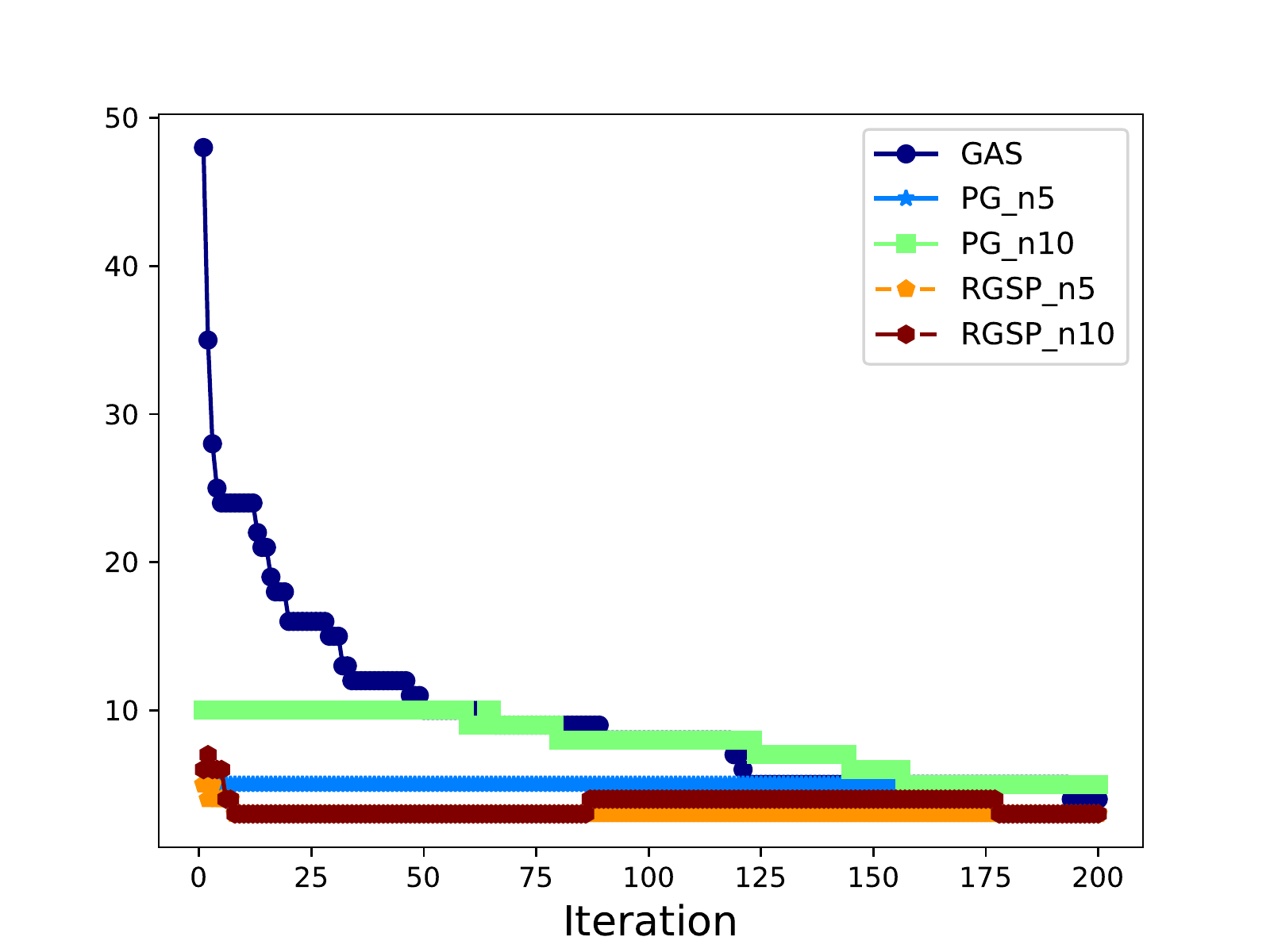} \label{fig:PGD_in_loss_support_b}}
	\caption{ \textbf{Test 1.1:} Converge of the iterative projection algorithms when the target distribution is one of the training snapshots.}
	\label{fig:PGD_in_loss_support}
\end{figure}

\begin{figure}[p]
	\centering
	\subfloat[Objective function.]{\includegraphics[scale=0.45]{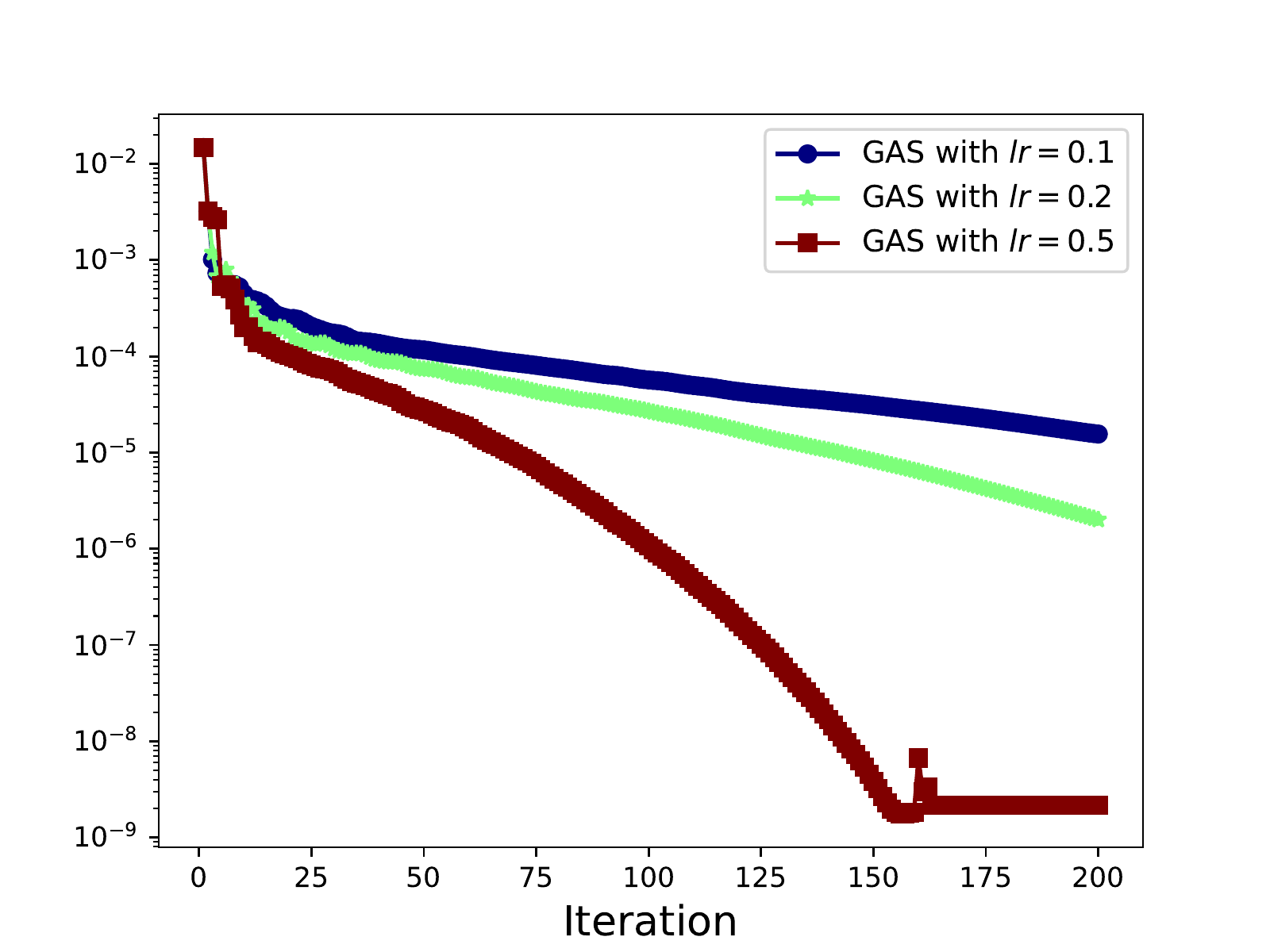}}\hspace{1cm}
	\subfloat[Cardinal of the support.]{\includegraphics[scale=0.45]{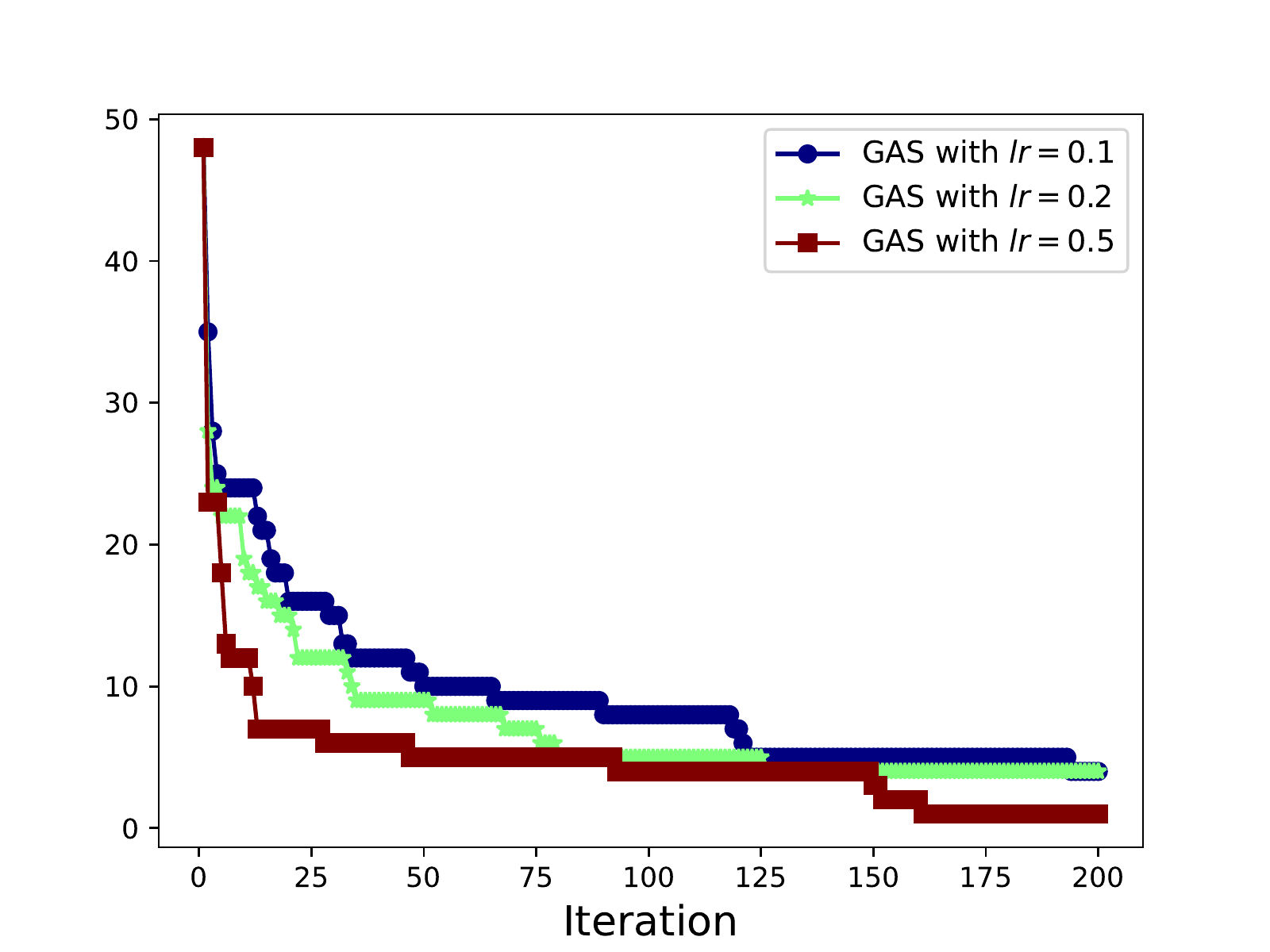}}
	\caption{ \textbf{Test 1.1:} Impact of the learning rate on convergence speed.}
	\label{fig:PGD_in_lr}
\end{figure}

\subsection{Best $n$-term barycentric approximation}

In this section, we study the performance of the algorithms presented in Section~\ref{sec:algo-best-n-term} to compute a best $n$-term barycentric approximation. Recall from \eqref{eq:best-n-term-barycenter} that this task consists in approximating some target measure $\alpha\in \Pr$ with the $n$-sparse barycenter:
\begin{equation}
P_{\cA_N^n}(\ma) \in \argmin_{\mb \in \class_N^n} W_2^2(\ma, \mb)~,
\end{equation}
where $\barsetsp_N=\{y_1, \dots, y_N\}$ is a dataset of $N$ snapshot measures from $\rY$, and $n\leq N$. As stated in \eqref{eq:best-n-term-lambda}, computing $P_{\cA_N^n}(\ma)$ boils down to finding the best $n$-sparse barycentric weights
\begin{equation}
\weight^n_N(\ma) \in \argmin_{\weight^n_N \in \Sigma_N^n} W_2^2(\ma, \bary(\weight_N^n, \barsetsp_N))~.
\end{equation}
We test the ability of algorithms \PG{}, \GAS{} and \RGSP~to solve problem \eqref{eq:best-n-term-lambda-2} in scenarios of increasing difficulty.

\begin{figure}[p]
	\centering
	\subfloat[Objective function.]{\includegraphics[scale=0.45]{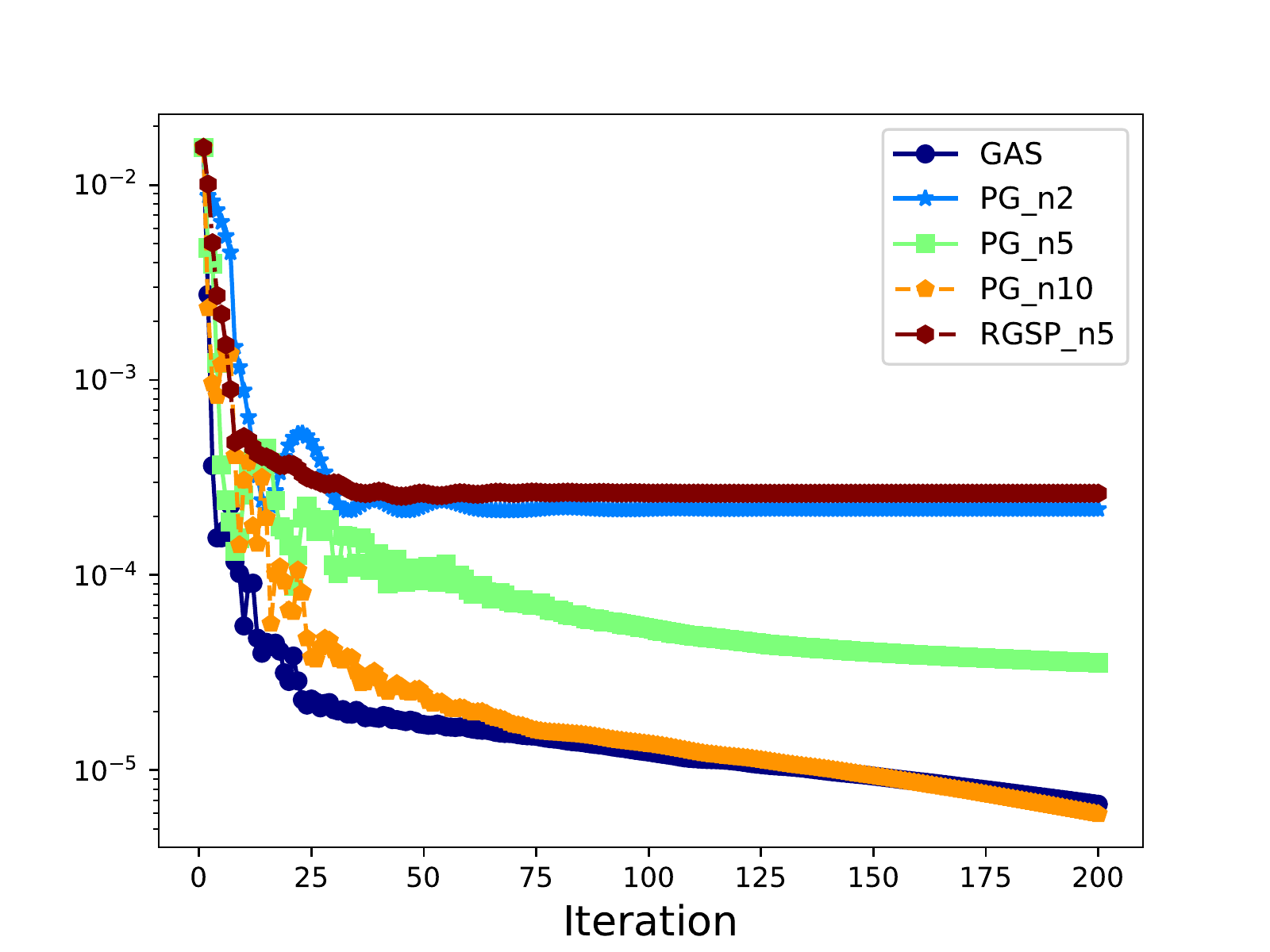}}\hspace{1cm}
	\subfloat[Cardinal of the support.]{\includegraphics[scale=0.45]{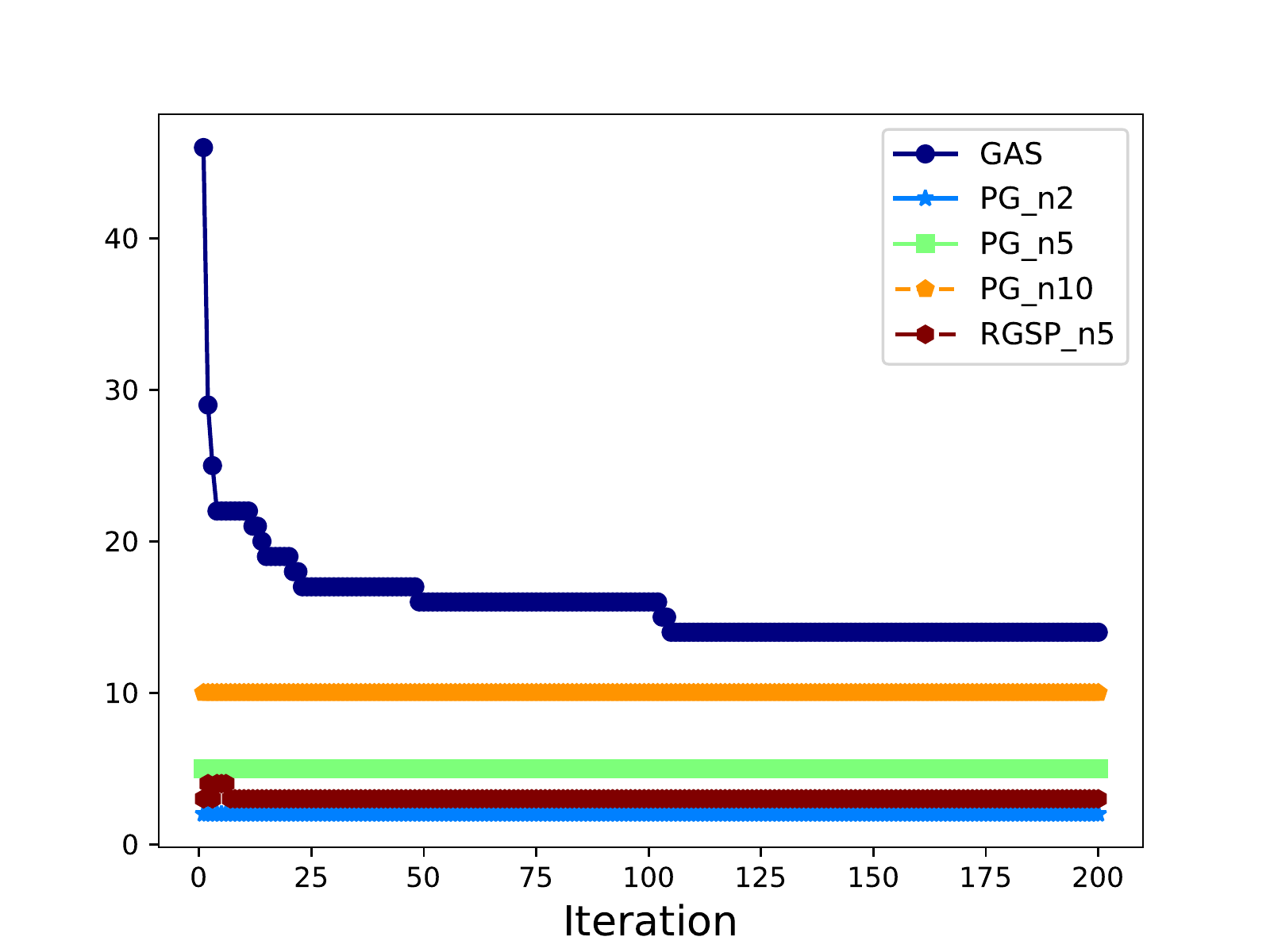}}
	\caption{\textbf{Test 1.2:} Convergence of the iterative projection algorithms when the target function is the Wasserstein barycenter of $2$ snapshots from the training dataset $\barsetsp_N$.}
	\label{fig:PGD_bary2_loss_support}
	
\end{figure}

\begin{figure}[p]
	\centering
	\subfloat[Reference distribution and predicted barycenter.]{\includegraphics[scale=0.45]{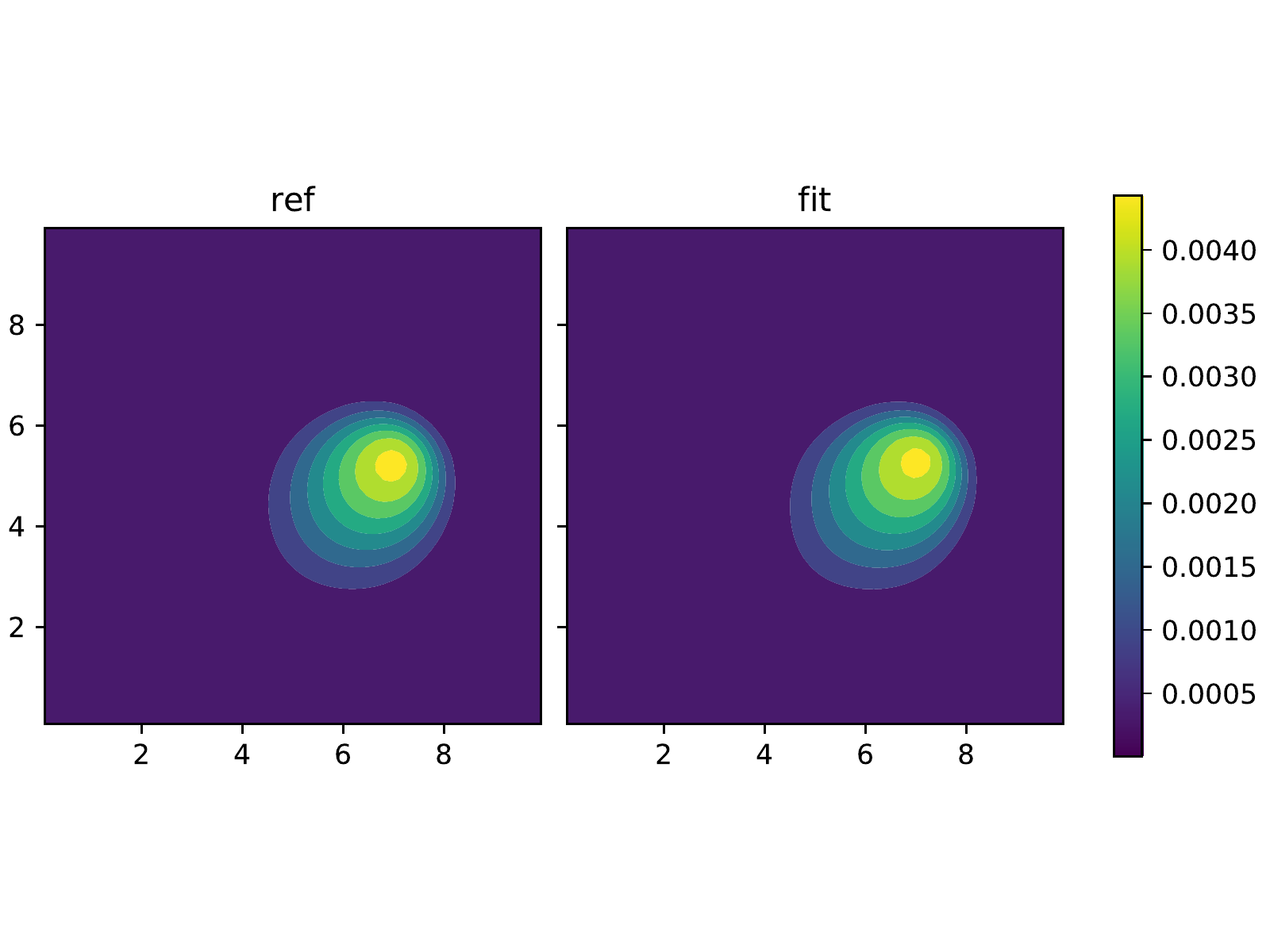}}\hspace{1cm}
	\subfloat[Predicted vector of weights.]{\includegraphics[scale=0.45]{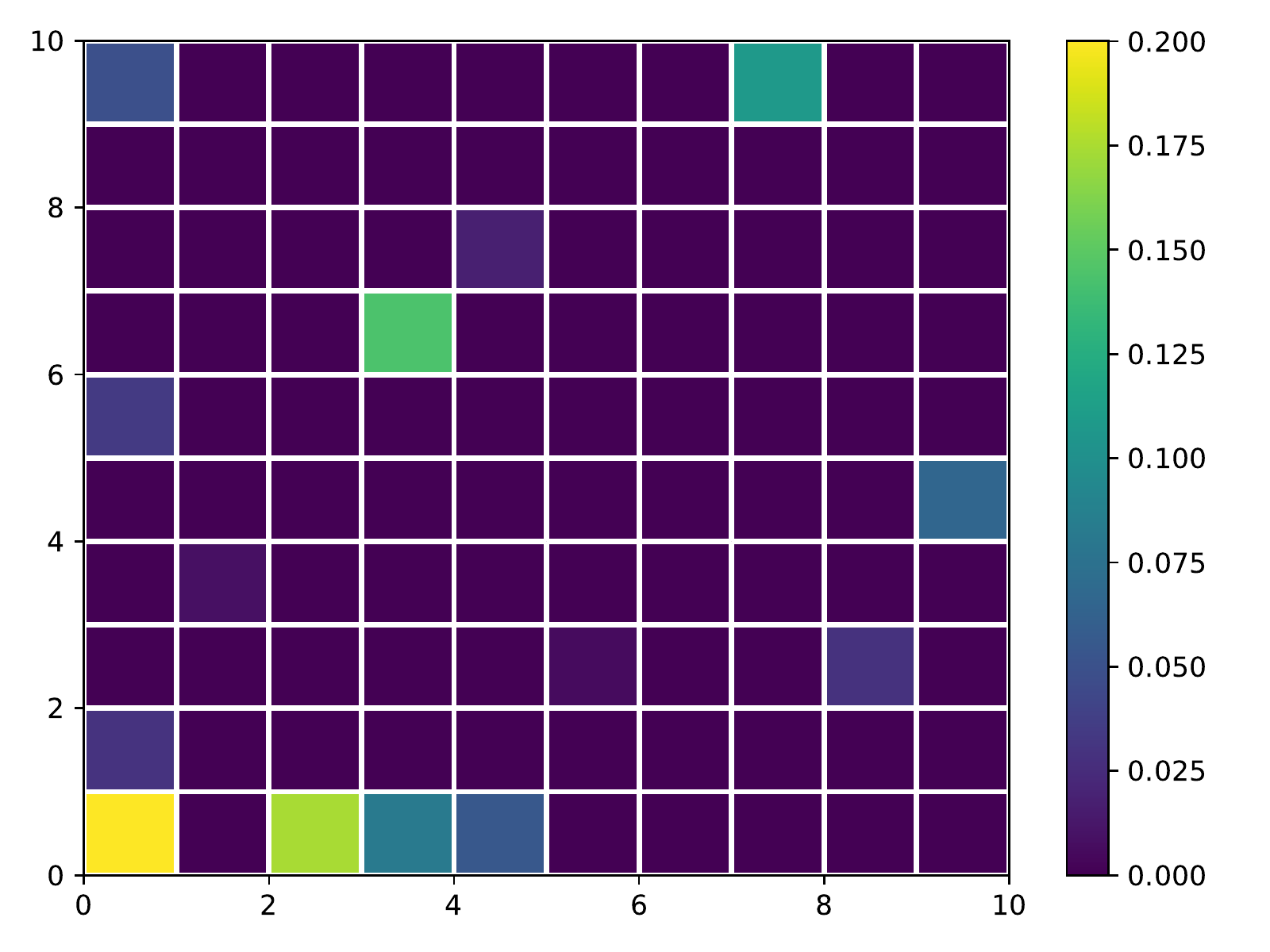}}
	\caption{\textbf{Test 1.2:} We recover a near-perfect fit to the reference distribution, but redundancy in the training dataset prevents us from recovering the expected vector of weights $(0.3, 0, 0.7, 0, 0, \dots)$.}
	\label{fig:PGD_bary2_prediction_weight}
\end{figure}

\begin{figure}[p]
	\centering
	\subfloat[Objective function.]{\includegraphics[scale=0.45]{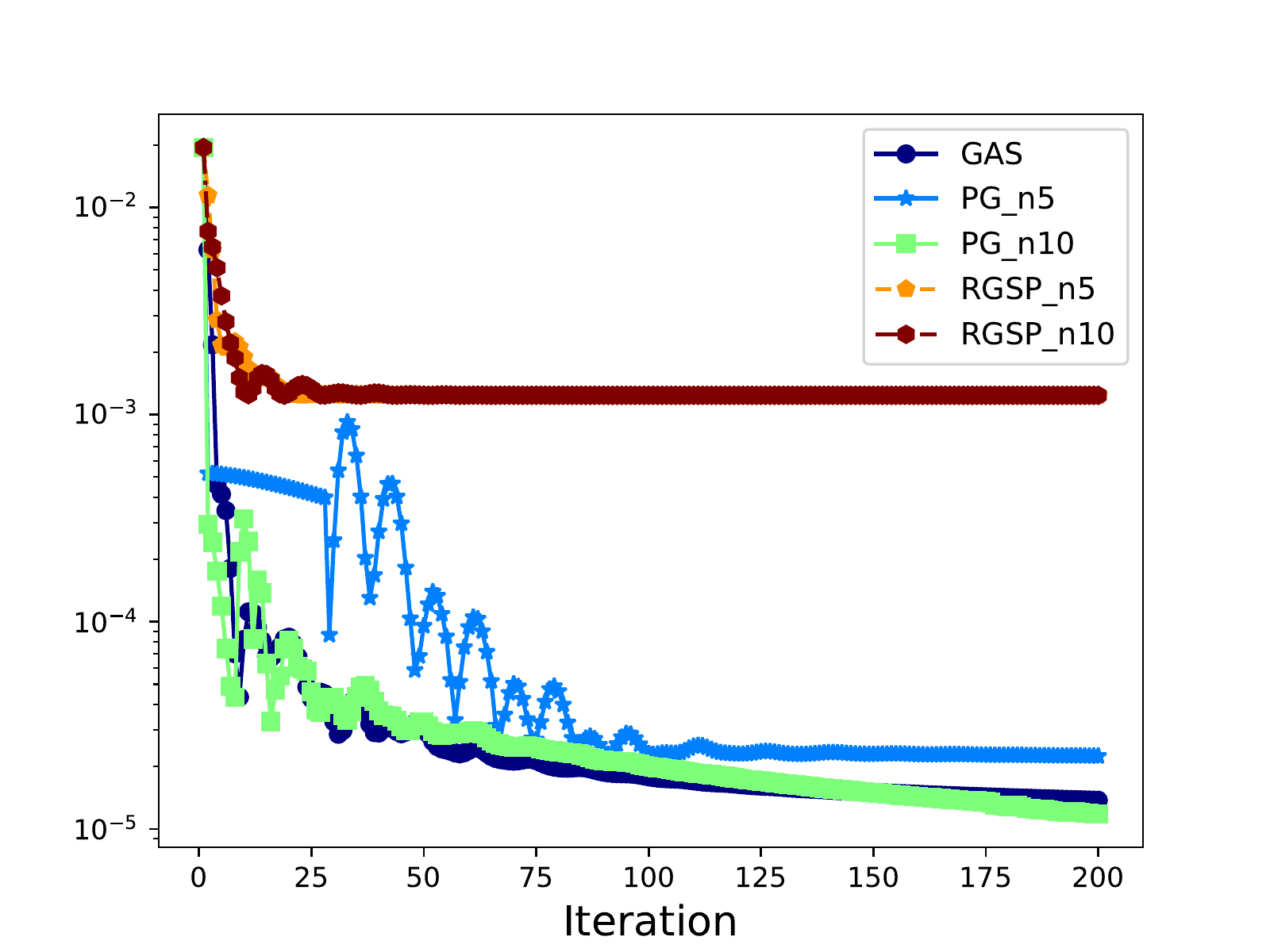}}\hspace{1cm}
	\subfloat[Cardinal of the support.]{\includegraphics[scale=0.45]{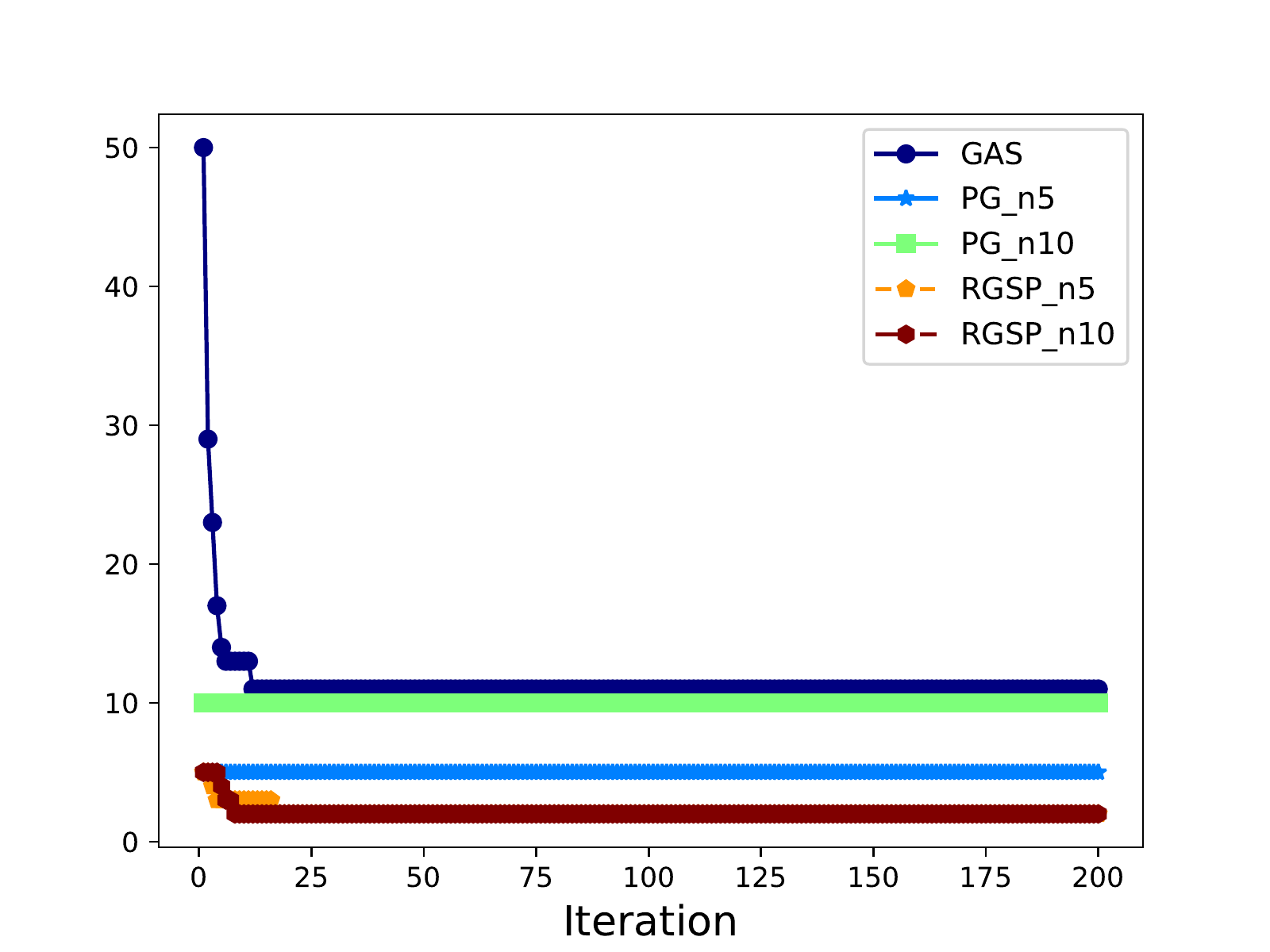}}
	\caption{ \textbf{Test 1.3:} Convergence of the iterative projection algorithms when the target distribution does not belong to the convex hull of the training dataset.}
	\label{fig:PGD_out_loss_support}
\end{figure}

\begin{figure}[p]
	\centering
	\subfloat[Reference distribution and predicted barycenter.]{\includegraphics[scale=0.45]{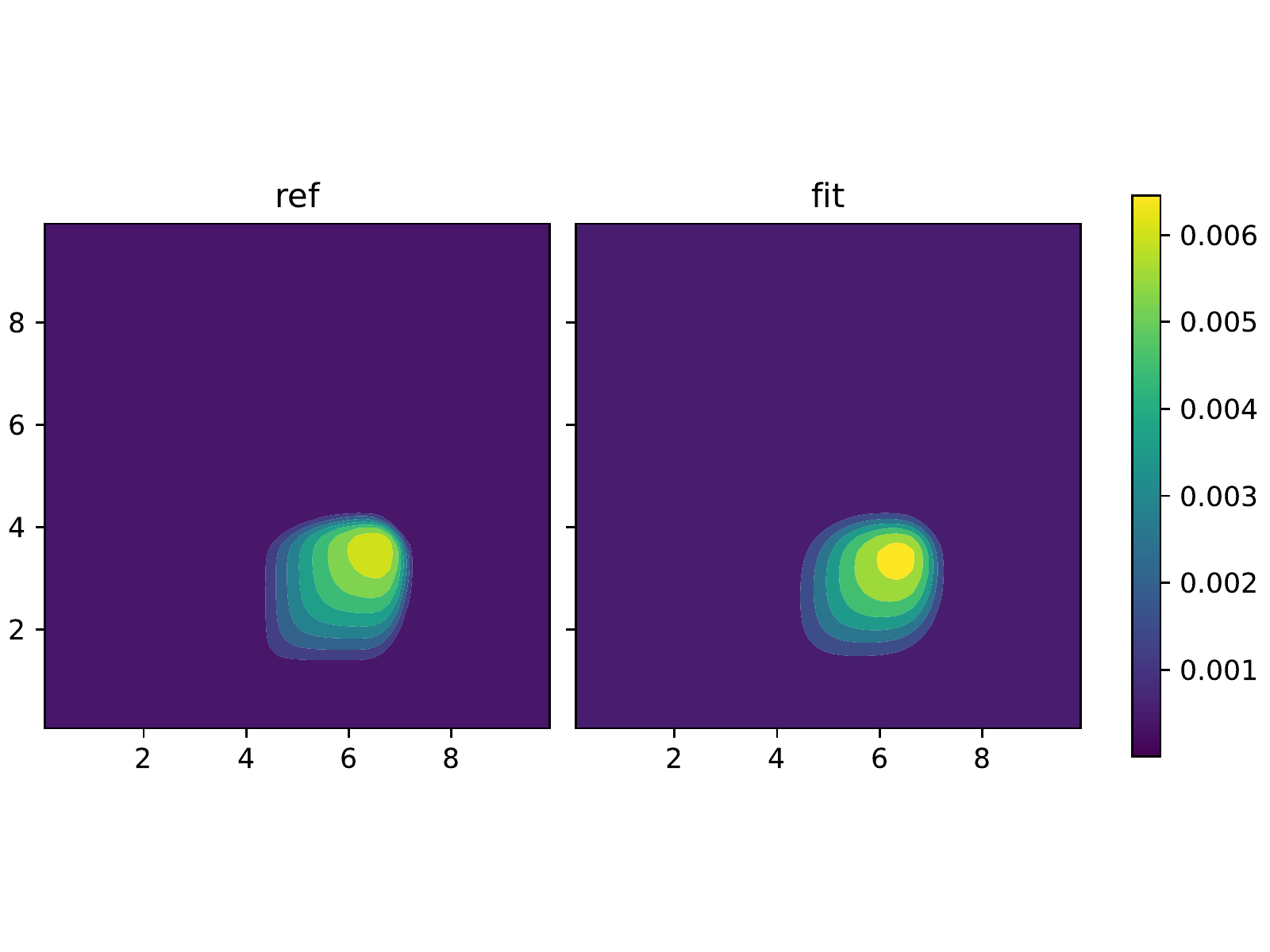}}\hspace{1cm}
	\subfloat[Predicted weights.]{\includegraphics[scale=0.45]{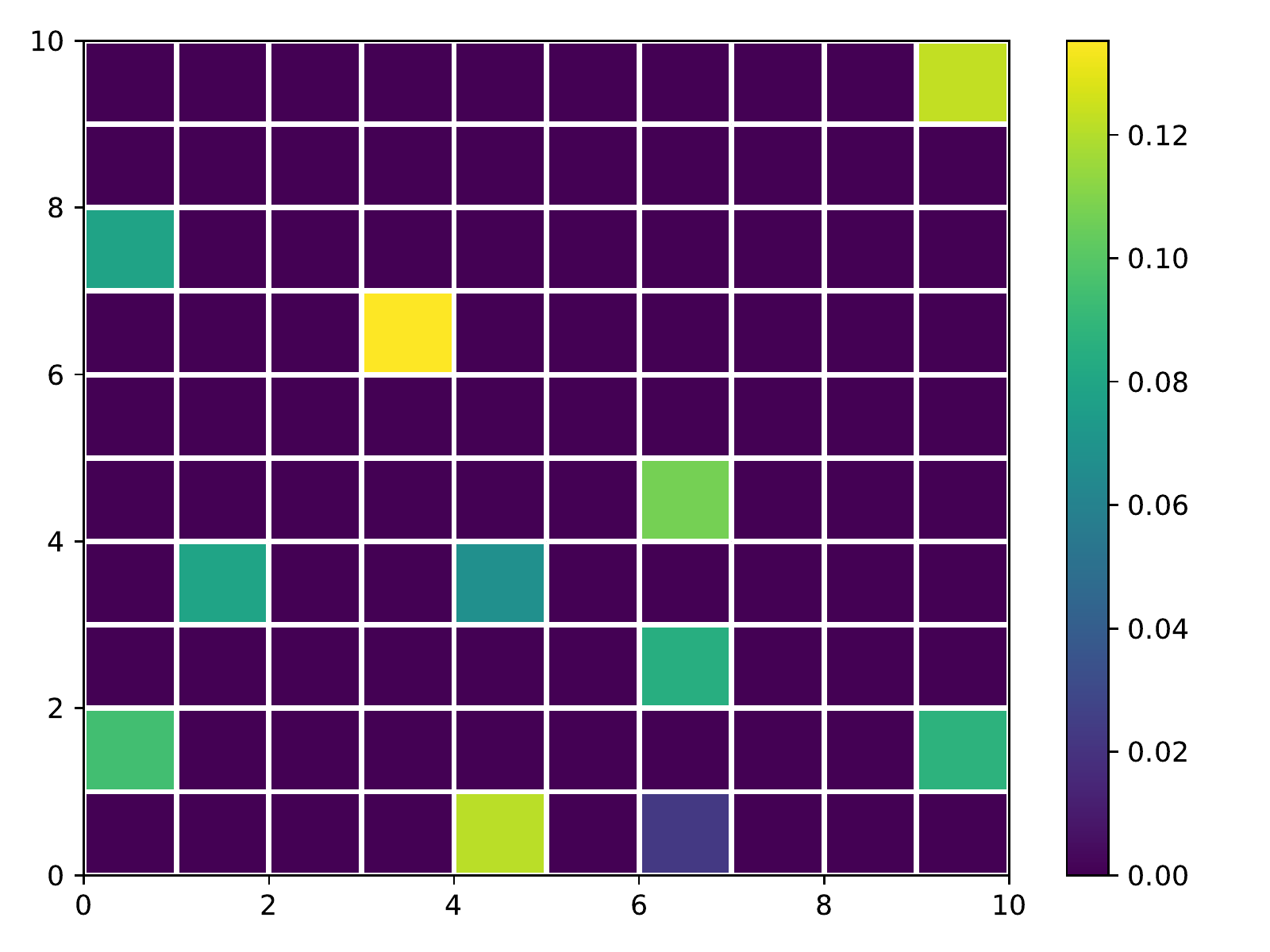}}
	\caption{\textbf{Test 1.3:} Approximation of a reference distribution that does not belong to the convex hull of the training dataset.}
	\label{fig:PGD_out_prediction_weight}
\end{figure}

\begin{figure}[p]
	\centering
	\subfloat[Naive guess: $M(x)=\Id$.]{\includegraphics[scale=0.4]{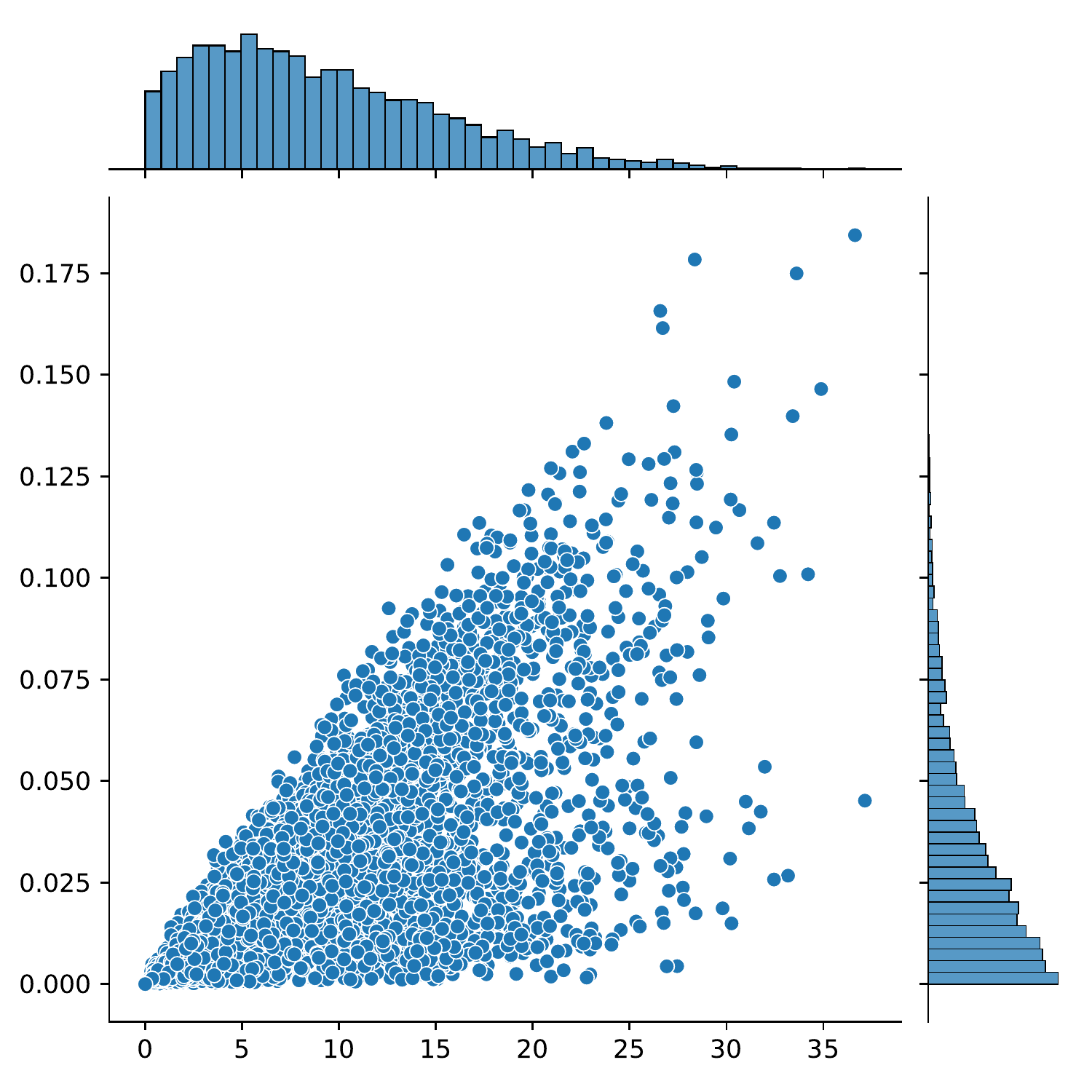}}\hspace{0.2cm}
	\subfloat[$M(x)$ after learning the local Euclidean metric with \eqref{eq:learned-EE}.]{\includegraphics[scale=0.4]{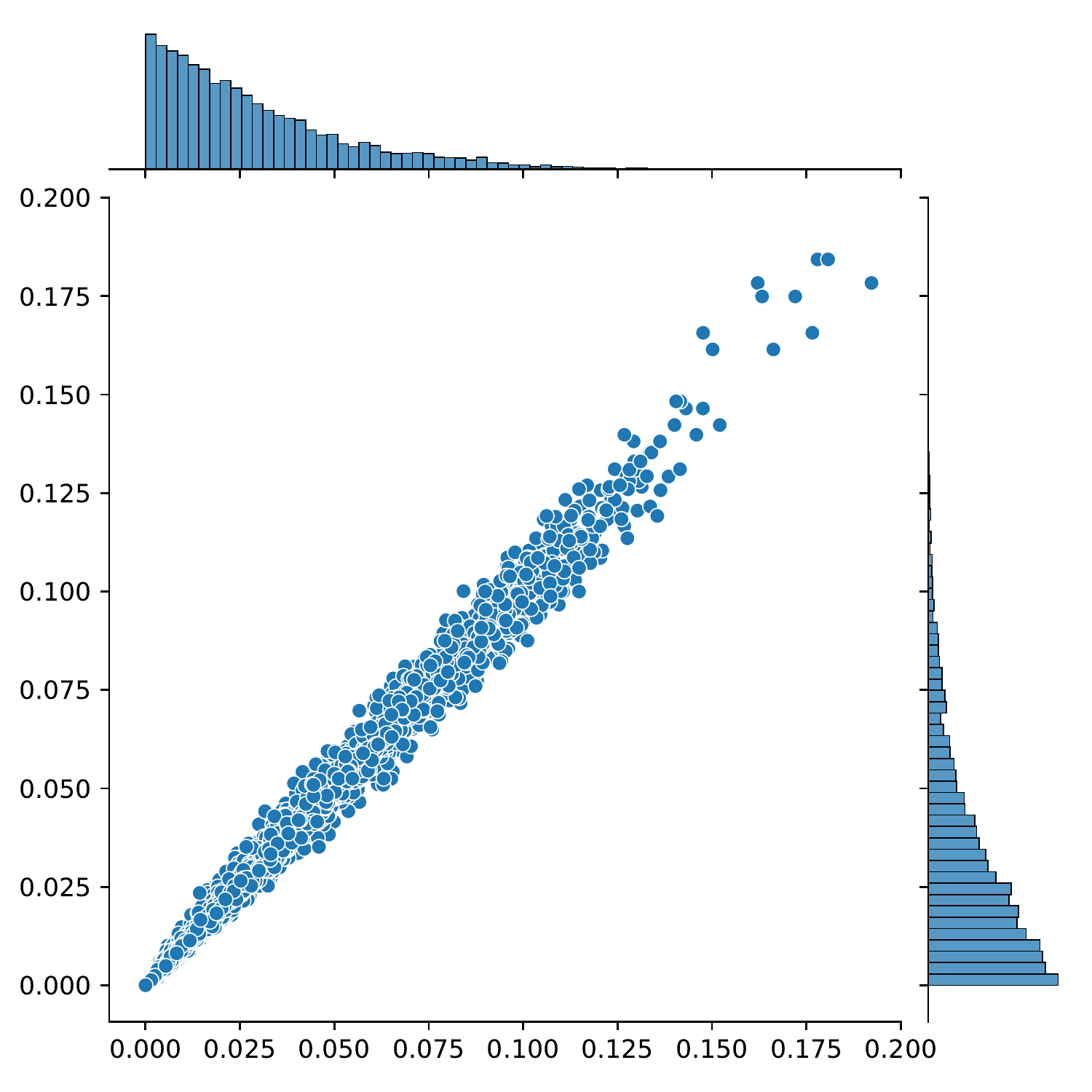}}
	\caption{Local Euclidean metric: joint plot between $(x- \tilde x)^T M(x) (x- \tilde x)$ and $W_2( f(x), f(\tilde x))^2$ for all $(x, \tilde x) \in \rX_N\times \rX_N$. (b) After learning $M(x)$, points cluster around the line $y=x$, revealing a good fit of the local Euclidean metric $(x- \tilde x)^T M(x) (x- \tilde x)$ to the squared Wasserstein distance $W_2( f(x), f(\tilde x))^2$.}
	\label{fig:LE}
\end{figure}

\begin{figure}[p]
	\centering
	\includegraphics[scale=0.4]{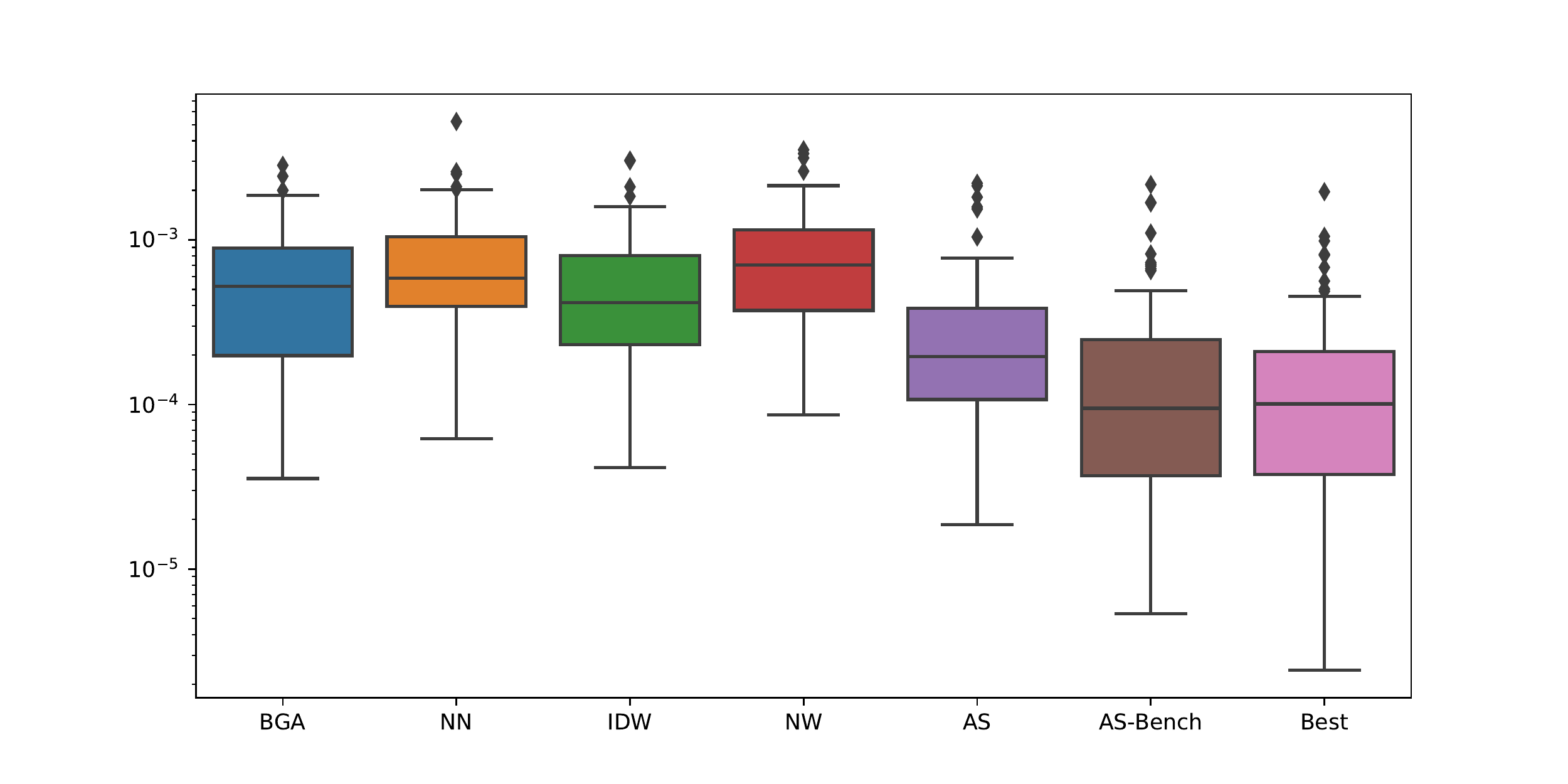}
	\caption{Boxplot for approximation errors in the validation set.}
	\label{fig:boxplot_MOR}
\end{figure}
\begin{figure}[p]
	\centering
	\includegraphics[scale=0.6]{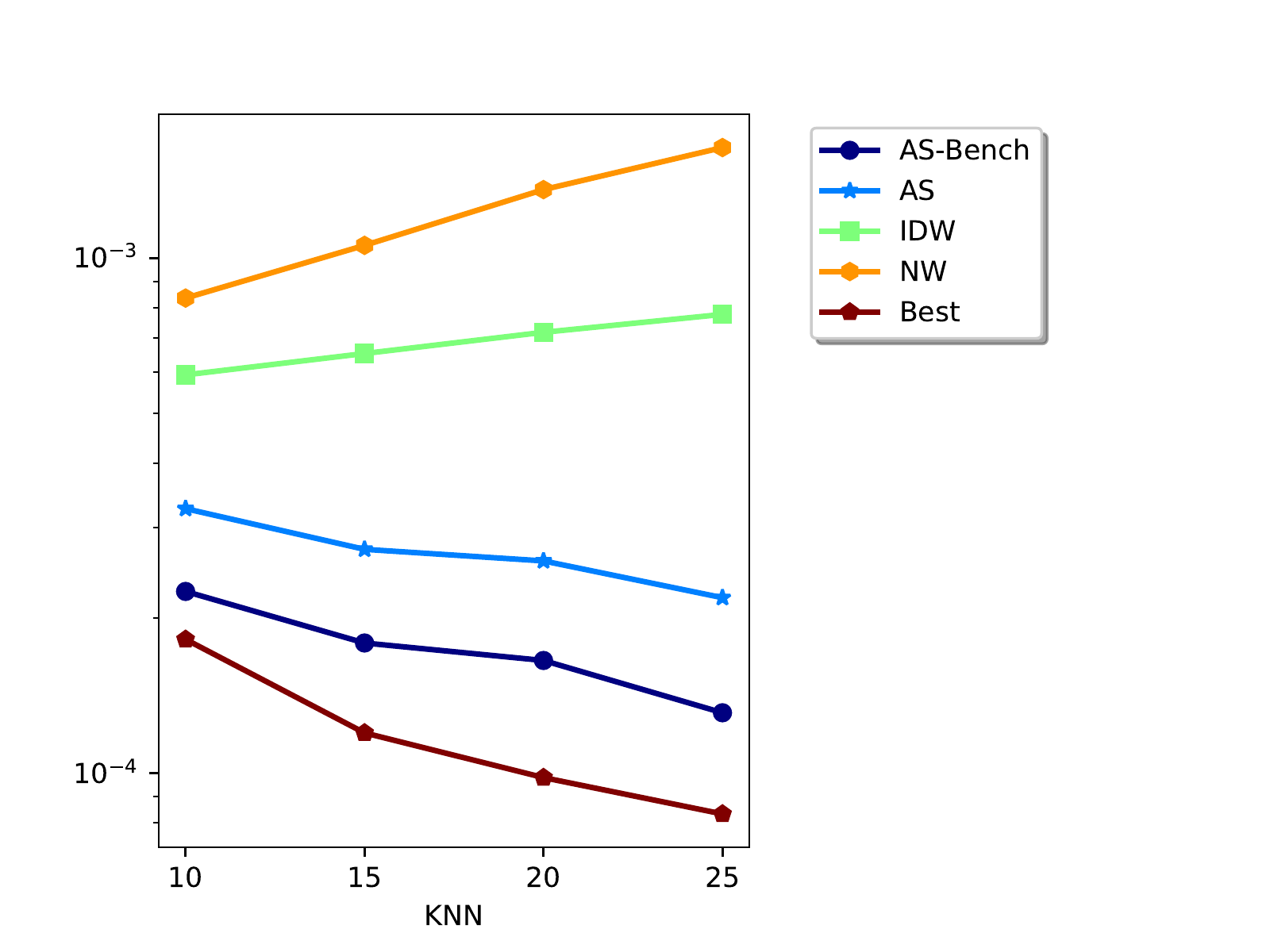}
	\caption{Average error as a function of the number $k$ of nearest neighbors.}
	\label{fig:error_curves_MOR}
\end{figure}

\begin{figure}[p]
	\centering
	\includegraphics[scale=0.8]{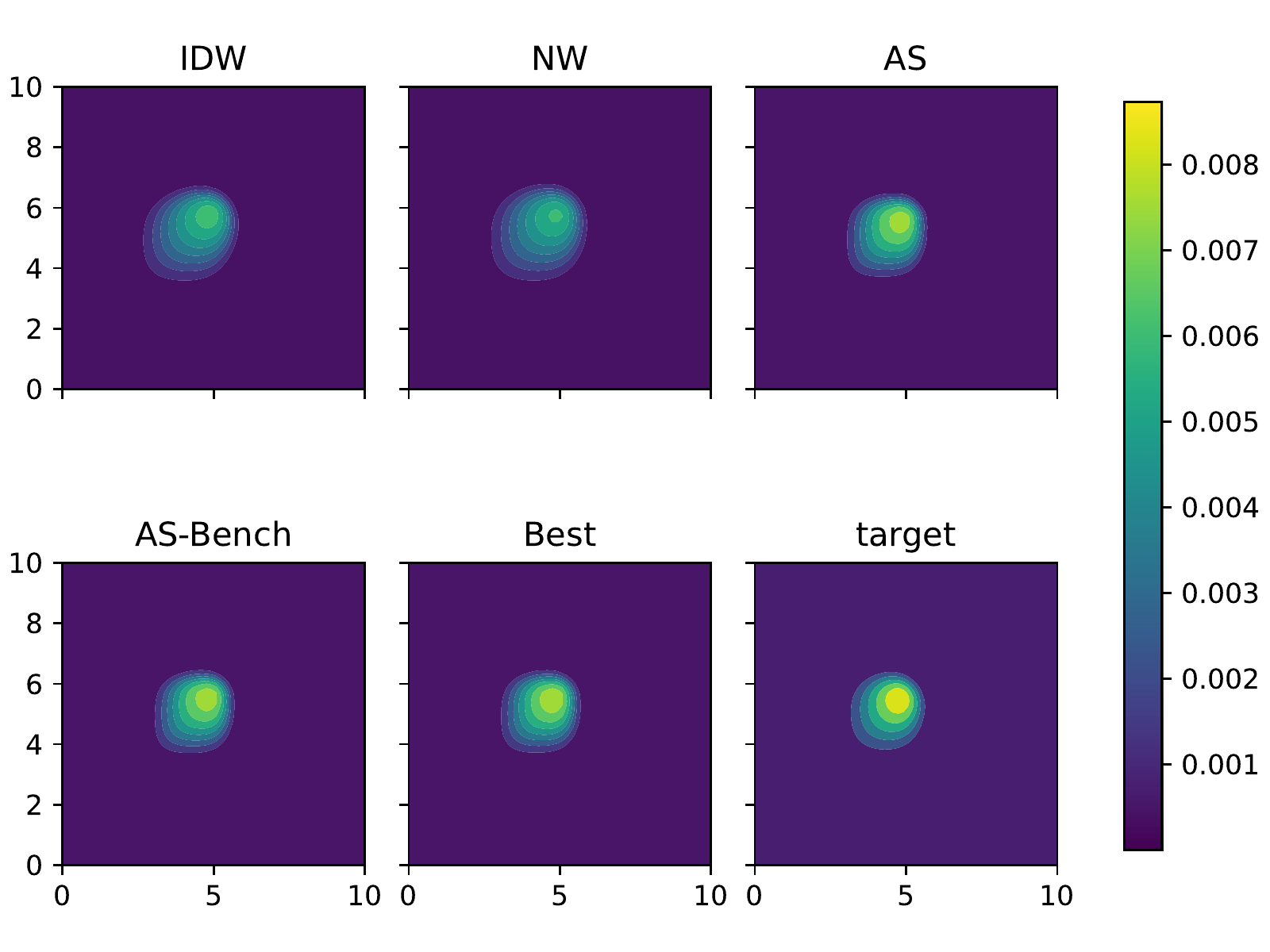}

	\caption{Target distribution and predictions made by different regression methods.}
	\label{fig:reconstruction_MOR}
\end{figure}

\paragraph{Test 1.1 -- distributions that belong to the input dataset:} Starting with a simple sanity check, we set $N=100$ and consider a target function $\ma$ which belongs to a database $\rY_N = \{ y_i\}_{i=1}^N$. The measures $y_i$ are solutions to Burgers' equation for parameters $x_i\in X$ picked randomly. For this example, we choose the measure $\alpha=y_1$ as the target. The minimum of the loss function in problem \eqref{eq:best-n-term-lambda-2} is thus equal to $0$ and is attained for $\weight_N(\ma) = e_1 = (1,0, 0\dots)$. 

We test the ability of \PG, \GAS~and \RGSP~to find the optimal solution of this problem, starting from a collection of uniform weights for the initial guess.
Figure~\ref{fig:PGD_in_loss_support_a} shows the evolution of the loss function across iterations for different values of the target sparsity degree $n$. Figure~\ref{fig:PGD_in_loss_support_b} displays the evolution of the support across iterations. The accuracy obtained by \GAS~and \PG~  is much better than that of \RGSP~ method. We observe that the learning rate of the algorithms is an important hyperparameter: as illustrated in Figure \ref{fig:PGD_in_lr}, a suitable learning rate may significantly improve convergence speed.  Figure~\ref{fig:PGD_in_prediction_weight} shows the results of the \GAS~algorithm with $lr=0.5$. As can be seen, we recover exactly the reference weight for our target function; the reconstructed prediction is identical to the target solution.

\paragraph{Test 1.2 -- distributions that belong to the convex hull of the dataset:} We use the same dataset ($N=100$) but consider a more complex target function that is the barycenter of $y_1$ and $y_3$ with reference weights $\weight_N = (0.3, 0, 0.7, 0, 0, \dots)$. Figure~\ref{fig:PGD_bary2_loss_support} shows that the \GAS~and \PG~algorithms with $n=10$ give better accuracy than the other strategies. Let us also note that the true support of the target function is equal to $2$, but that many other minimizers and near-minimizers to problem \eqref{eq:best-n-term-lambda-2} may exist with a larger support. This is especially true in a context where the dataset $\barsetsp_N$ contains redundant snapshots.
Figure~\ref{fig:PGD_bary2_prediction_weight} shows the eventual convergence of the projected gradient descent scheme to a suitable solution: the reconstruction is very close to the reference target. Please also note that although we do not recover exactly the reference weights, the coordinates at locations 1 and 3 are larger than the other ones.

\paragraph{Test 1.3 -- distributions that do not belong to the convex hull of the dataset:} Finally, we stick to the same simple dataset but consider a target measure $\ma$ that is a solution to Burgers' equation for some parameter vector $x$ but was not explicitly generated as a barycenter of distributions in the dataset $\barsetsp_N$. In this case, we  don't have access to a ground truth vector of interpolation weights. Figure shows that the cardinal of the optimal support for this target function is around $10$, with the \GAS~and \PG~methods run with $n=10$ providing the best reconstruction accuracy. As shown in Figure~\ref{fig:PGD_out_prediction_weight}, the reconstructed solution is not perfectly equal to the target function but provides a close approximation.

\begin{center}
	
	\begin{table}
		\qquad~
\begin{tabular}{lccccc }
\hline
Method & Acronym & Formula &  Type & Local & Nearest \\
& & & & metric & neighbors \\
\hline
Barycentric Greedy  & \BGA & \eqref{eq:greedy-induction} & Feasible & No & No\\
Algorithm & & & & & \\
First Nearest Neighbor & \NN & & Feasible & Yes & Yes \\
Inverse Distance Weighting & \IDW & \eqref{eq:K-IDW}, \eqref{eq:nadaraya} & Feasible & \underline{Yes}/No & Yes \\
Nadaraya-Watson & \NW & \eqref{eq:K-NS}, \eqref{eq:nadaraya} & Feasible & \underline{Yes}/No &Yes \\
Adaptive, Sparse & \AS & \eqref{eq:weights-EE-v2-full} & Feasible & Yes & \underline{Yes}/No \\
Adaptive, Sparse & \AS-Bench & \eqref{eq:weights-EE-v1} & Benchmark & No & No \\
Benchmark & & & & & \\
Best Barycenter & \Best & \eqref{eq:best-n-term-lambda-fx} & Benchmark & No & No \\
\end{tabular}
\caption{Summary of strategies tested for Model Order Reduction.}
\label{summary-strategies}
\end{table}
\end{center}

\subsection{Performance for Model Order Reduction}
In this section, we compare some of the above discussed strategies for structured prediction. The comparison is done for an example of model reduction of the Burgers' equation. For the sake of clarity, we summarize the strategies that we have tested and their main features in Table~\ref{summary-strategies}. In the ``Type'' column, we record whether the method is feasible in practice, or if it is a benchmark that relies on an oracle for the ground truth distribution $f(x)$ to produce some notion of optimal performance. We also recall if the precomputation of local Euclidean metrics or of nearest neighbors is required. When several options are available, we underline our choice in this series of experiments.

We work with a set of $N=100$ training snapshots and $M=100$ snapshots for validation. We fix the maximum sparsity degree to $n_{\max}=10$ so that all proposed methods compute Wasserstein barycenters that involve at most 10 training snapshots. The number of nearest neighbors is fixed to $k=10$. We consider $\sigma = \frac{1}{2}$  for the Gaussian kernel of the Nadaraya-Watson interpolation and $p=1$ for Inverse Distance Weighting method.

Figure~\ref{fig:boxplot_MOR} shows a boxplot of approximation errors on the validation set. The best barycenter method serves as a benchmark for the absolute optimal performance that one can obtain when approximating the validation set using Wasserstein barycenters of the training distributions. We see that, overall, feasible methods perform relatively well with accuracy that is degraded by less than one order of magnitude.

Among all feasible methods, the \AS~approach stands out as the one that provies the best accuracy. We can examine how much accuracy is lost in the Euclidean embedding step by comparing \AS~with \AS-Bench, which implements the same strategy but relies on an oracle to get access to the true Wasserstein distances between the ground truth solution $f(x)$ and the training distributions $f(x_i)$. We observe that we lose a factor of $2$ in terms of accuracy, which suggests that our local Euclidean metric is performing reasonably well but imperfectly -- as confirmed in Figure~\ref{fig:LE}.

Going further, an interesting observation that stems from Figure~\ref{fig:boxplot_MOR} is that the greedy approach \BGA{} performs similarly to the naive adaptive approach of selecting the first $k=10$ nearest neighbor of the target measure using the Local Euclidean Embedding, and computing the best weights with an oracle nearest neighbor projection. 
The same remark applies to the kernel-based methods \IDW{} and \NW. Since these two methods are based on a certain a priori assumption regarding the behavior of the barycentric weights, the fact that they are outperformed by the data-driven strategy reveals that the kernel heuristic is not optimal.

One can also compare the methods with respect to the computational effort that is required for the training phase. From this perspective, \BGA{} requires a lot of time to find the $n=10$ global basis functions (see Table \ref{table:run-time}). During the online inference step, the method is lightweight as we simply interpolate vectors of weights using a radial basis function and compute a single Wasserstein barycenter of size $n=10$. All the other strategies rely on the precomputation of local Euclidean metrics during the offline step, with a significantly shorter run time compared to \BGA{}. However, during the online inference step, we have to either do some interpolation or optimization to estimate the weights. We report in Table \ref{table:run-time} the obtained run times in the case of \AS{}. The time reported in the inference corresponds to one iteration of the algorithm. In our case, we did not optimize the number of steps and we performed 200 iterations for each sample.


\begin{table}
\centering
\begin{tabular}{lcc}
	\hline
	Method & Training time (in s.) & Predict time per sample (in s.)\\
	\hline 
	\BGA{} & $1.02.10^{7} (\approx 11.9 \text{ days })$  &  $6.02$ \\
	\AS{} & $64.78$ & $7.07$ 
\end{tabular}
\caption{Most relevant run time values.}
\label{table:run-time}
\end{table}

We end this section by illustrating the behavior of the methods when the number $k$ of nearest neighbors varies. Figure~\ref{fig:error_curves_MOR} shows the average error on the validation set as a function of $k$. We observe that as $k$ increases, the error decreases for \AS~type methods but that this is not the case for kernel-based methods. This ``instability'' is another indication that the heuristics underlying these methods are not totally optimal -- at least for a naive choice of kernel function.

Last but not least, Figure~\ref{fig:reconstruction_MOR} illustrates predictions made by the different regression methods for a target solution that corresponds to a parameter $x$ drawn at random.

\section{Conclusion}

This paper is a contribution towards efficient numerical methods for sparse approximations and structured prediction in $\Pr$ through Wasserstein barycenters. We have introduced the concept of best $n$-term barycenter, with tractable algorithms. We then explain in what sense this notion provides a benchmark of optimal performance for structured prediction with sparse barycenters. Since the best $n$-term barycenter cannot be computed without access to an oracle, we have introduced a feasible, fully adaptive and sparse interpolation method. This strategy generalizes classical reduction concepts such as Principal Component Analysis in vector spaces, or tangent PCA and barycentric greedy approaches in $\Pr$.  The behavior of all the proposed algorithms is overall superior to existing approaches in the simple numerical examples that we have considered. However, the limitations of the Wasserstein barycenteric construction also prevent us from applying this method convincingly on multi-modal distributions $y_i$ that may not be approached well using Wasserstein barycenters. Future works will focus on extending the present ideas to measures that do not have the same mass, and to notions of interpolation between distributions that provide stronger guarantees than the optimal transport metric on the preservation of the supports' topologies.

\appendix
\section{Proof of Lemma \ref{lemma:cont-weights}}
\label{sec:proof-lemma}
The proof of this result crucially relies on the following minimum theorem  from \cite{Clarke1975}. We provide a statement which is slightly adjusted to our current purposes.

\begin{theorem}[Theorem 2.1 of \cite{Clarke1975}]
\label{th:min-thm}
Let $\cV$ be a sequentially compact space, and let $h:\bR^N\times \cV \to \bR$ have the following properties:
\begin{itemize}
\item $h$ is lower semi-continuous in $(\omega, \nu)$.
\item $h$ is Lipschitz in $\omega$, uniformly for $\nu\in \cV$.
\item $\partial_\omega h(\omega,\nu)$ is lower semi-continuous in $(\omega, \nu)$.
\end{itemize}
Then, if we let $\bar h(\omega) = \min_{\nu\in \cV} h(\omega, \nu)$, we have that $\bar h$ is Lipschitz, and differentiable in $\bR^N$.
\end{theorem}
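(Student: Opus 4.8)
The plan is to treat the two assertions separately: Lipschitz continuity of $\bar h$ is elementary, whereas differentiability is the substantive part and will be reduced, through the Clarke subdifferential calculus, to showing that the generalized gradient $\partial\bar h(\omega)$ is a single point at every $\omega$. First I would record that the defining minimum is attained: for fixed $\omega$ the map $\nu\mapsto h(\omega,\nu)$ is lower semi-continuous on the sequentially compact space $\cV$, so the minimizing set $M(\omega)\coloneqq\{\nu\in\cV : h(\omega,\nu)=\bar h(\omega)\}$ is nonempty. Lipschitz continuity then follows by the usual marginal-function estimate: given $\omega_1,\omega_2$, pick $\nu_2\in M(\omega_2)$ and use the uniform Lipschitz bound in $\omega$ to get $\bar h(\omega_1)\le h(\omega_1,\nu_2)\le h(\omega_2,\nu_2)+L\|\omega_1-\omega_2\|=\bar h(\omega_2)+L\|\omega_1-\omega_2\|$; exchanging the roles of $\omega_1$ and $\omega_2$ yields $|\bar h(\omega_1)-\bar h(\omega_2)|\le L\|\omega_1-\omega_2\|$ with the same constant $L$.

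For differentiability I would recall that a locally Lipschitz function on $\bR^N$ is strictly differentiable at $\omega$ if and only if its Clarke generalized gradient $\partial\bar h(\omega)$ is a singleton, and that global Lipschitzness already guarantees $\partial\bar h(\omega)$ is a nonempty compact convex set. Hence it suffices to prevent it from containing two distinct vectors. The key intermediate step is the marginal-function inclusion
\[
\partial\bar h(\omega)~\subseteq~\overline{\mathrm{co}}\bigcup_{\nu\in M(\omega)}\partial_\omega h(\omega,\nu).
\]
Proving this inclusion is where the hypotheses are spent: given $\xi\in\partial\bar h(\omega)$, I would realize it through sequences $\omega_k\to\omega$, $t_k\downarrow 0$ generating the generalized directional derivatives, select (near-)minimizers $\nu_k\in M(\omega_k)$, and extract a subsequence $\nu_k\to\bar\nu$ by sequential compactness of $\cV$. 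Lower semi-continuity of $h$ certifies that $\bar\nu\in M(\omega)$, while lower semi-continuity of the set-valued map $(\omega,\nu)\mapsto\partial_\omega h(\omega,\nu)$ is what lets me match the limiting difference quotients of $\omega\mapsto h(\omega,\nu_k)$ with elements of $\partial_\omega h(\omega,\bar\nu)$, which after a convexity/separation argument gives the displayed inclusion.

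The hard part will be the final collapse to a singleton. The inclusion by itself does not force differentiability: if distinct minimizers carry distinct partial gradients, $\bar h$ genuinely develops a kink, as the toy example $\min_{\nu\in\{\pm1\}}\nu\omega=-|\omega|$ shows. Differentiability therefore rests on the active partial gradients agreeing, i.e. on $\partial_\omega h(\omega,\nu)$ being a single common vector for all $\nu\in M(\omega)$. In the regime of interest this holds because $h$ is affine in $\omega$ and the barycenter minimizer is unique under the density assumption, so $M(\omega)$ is one point and $\partial_\omega h(\omega,\cdot)$ is single-valued; the right-hand side of the inclusion is then a single vector $g(\omega)$, forcing $\partial\bar h(\omega)=\{g(\omega)\}$ and hence strict differentiability of $\bar h$ with $\nabla\bar h(\omega)=g(\omega)$. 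I would flag this collapse as the decisive step, since it is precisely here that the lower semi-continuity of $\partial_\omega h$ together with the compactness of $\cV$ rule out spurious limiting gradient directions and certify that no second vector can survive in $\partial\bar h(\omega)$.
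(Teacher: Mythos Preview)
The paper does not prove this statement: it is quoted as Theorem~2.1 of Clarke (1975) and used as a black box in the proof of Lemma~\ref{lemma:cont-weights}. There is therefore no ``paper's own proof'' to compare your attempt against.

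That said, your analysis is valuable and in fact exposes a defect in the paper's transcription of Clarke's result. Your Lipschitz argument is the standard one and is fine. Your route to differentiability via the Clarke marginal-function inclusion $\partial\bar h(\omega)\subseteq\overline{\mathrm{co}}\bigcup_{\nu\in M(\omega)}\partial_\omega h(\omega,\nu)$ is the right machinery, and you correctly observe that this inclusion alone cannot force $\partial\bar h(\omega)$ to be a singleton. Your counterexample $h(\omega,\nu)=\nu\omega$ on $\cV=\{-1,1\}$ satisfies all three listed hypotheses yet gives $\bar h(\omega)=-|\omega|$, which is not differentiable at $0$. So the theorem \emph{as stated in the paper} is false; Clarke's original result gives a formula for the generalized gradient $\partial\bar h$, not classical differentiability of $\bar h$ without further assumptions.

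Your final ``collapse'' step is the honest fix: you invoke the specific structure of the application (affinity of $L$ in $\omega$ and uniqueness of the barycenter under the density assumption, hence $|M(\omega)|=1$) to force the right-hand side of the inclusion to be a singleton. This is precisely what makes Lemma~\ref{lemma:cont-weights} true, but it lies outside the hypotheses of the theorem as the paper records them. In short, you have not proved the stated theorem---because it is not provable as stated---but you have correctly identified the missing ingredient and supplied exactly what the paper's actual application requires.
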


\begin{proof}[Proof of Lemma \ref{lemma:cont-weights}] We start by fixing the set $\barset_N$. The proof then consists in:
\begin{enumerate}
\item[i)] Proving that there exists a weakly sequentially compact subset $\cV\subset \cP_2(\Omega)$ such that for all $\weight_N\in \Sigma_N$,
\begin{equation}
\label{eq:min-bary-ball}
\bar L(\weight_N) = \min_{\mgI\in \cV} L(\weight_N, \mgI).
\end{equation}
In other words, we can minimise over $\cV$ instead over $\Pr$ in the definition of the barycenter.
\item[ii)] Verifying that the function $h = L$ satisfies the conditions of Theorem \ref{th:min-thm}. This way, by application of the theorem, we can conclude that $\bar L$ is Lipschitz, and differentiable in $\Sigma_N\subset \bR^N$.
\end{enumerate}

To prove i), we fix $\weight_N\in \Sigma_N$ and we consider the set $\cS=\cup_{i=1}^N \cB(\ma_i, \bar r)$ where $\cB(\ma_i, \bar r)$ is the ball of center $\ma_i$ and radius
$$
\bar r \coloneqq \max_{(\mu,\nu)\in \barset_N\times \barset_N} W_2(\mu,\nu).
$$
Since $\cS$ is bounded, there exists a ball $\cV$ that contains it, and for all $\weight_N\in \Sigma_N$,
$$
L(\weight_N, \nu) > \bar r ,\quad \forall \nu \not\in \cV
$$
whereas
$$
L(\weight_N, \nu) \leq \bar r ,\quad \forall \nu \in \cV.
$$
Therefore the infimum in the barycenter problem \eqref{eq:bary-proof-compact} is in $\cV$. We next prove that the infimizer is indeed a minimizer, namely that there exists a measure $\mgI_\infty$ in $\cV$ that minimizes $L(\weight_N, \mgI)$ over all $\mgI\in \cV$. For this, let $\mgI_n \in \cV$ be an inifimizing sequence. Since the ball $\cV$ is weak sequentially compact in $\cP_2(\Omega)$ (see \cite{YKW2021}), up to extracting a subsequence, there exists $\mgI_\infty\in \cV$ such that $\mgI_n  \rightharpoonup^* \mgI_\infty$. Finally, since $\nu \mapsto L(\weight_N, \nu)$ is lower-semi continuous with respect to the weak convergence, then $\underset{n\to \infty}{\lim\,\inf}\, L(\weight_N, \mgI_n)\geq L(\weight_N, \mgI_\infty)$, thus proving that $\mgI_\infty$ is a minimiser of \eqref{eq:min-bary-ball}.

To prove ii), it suffices to verify that the function $h=L$ satisfies the conditions of Theorem \ref{th:min-thm} so we can apply it to derive the desired continuity result.

Finally, weak sequential compactness of the set $\bary(\Sigma_N, \barset_N)$ follows from compactness of $\Sigma_N$ in $\bR^N$ and the continuity of the application $\weight_N \to \bary(\weight_N, \barset_N)$.
\end{proof}

\bibliographystyle{unsrt}
\bibliography{references}

\end{document}